\newcommand{\BC}{{\mathbb {C}}}
\newcommand{\CC}{{\mathcal {C}}}
\newcommand{\CDH}{{\mathcal {D}\mathcal {H}}}
\newcommand{\CFH}{{\mathcal {F}\mathcal {H}}}
\newcommand{\CH}{{\mathcal {H}}}
\newcommand{\CS}{{\mathcal {S}}}
\newcommand{\C}{{\mathbb {C}}}
\newcommand{\RD}{{\mathrm {D}}}
\newcommand{\RU}{{\mathrm {U}}}
\newcommand{\RZ}{{\mathrm {Z}}}
\newcommand{\Ad}{{\mathrm{Ad}}}
\newcommand{\con}{{\mathrm{C}}}
\newcommand{\GL}{{\mathrm{GL}}}
\newcommand{\Hom}{{\mathrm{Hom}}}
\renewcommand{\Im}{{\mathrm{Im}}}
\newcommand{\Ker}{{\mathrm{Ker}}}
\newcommand{\tr}{{\mathrm{tr}}}
\newcommand{\vsp}{{\vspace{0.2in}}}
\newcommand{\vvsp}{{\vspace{0.1in}}}
\newcommand{\g}{\mathfrak g}
\newcommand{\gC}{{\mathfrak g}_{\C}}
\newcommand{\ve}{{\vee}}
\newcommand{\abs}[1]{\lvert#1\rvert}
\newcommand{\la}{\langle}
\newcommand{\ra}{\rangle}
\newcommand{\be}{\begin {equation}}
\newcommand{\ee}{\end {equation}}
\newcommand{\bee}{\begin {equation*}}
\newcommand{\eee}{\end {equation*}}
\renewcommand{\mid}{\,:\,}
\theoremstyle{plain}
\newtheorem{thm}{Theorem}[section]
\newtheorem{cor}[thm]{Corollary}
\newtheorem{lem}[thm]{Lemma}
\newtheorem{rmk}[thm]{Remark}
\begin{document}

\title[Gelfand-Kazhdan criterion]{A general form of Gelfand-Kazhdan criterion}

\author [B. Sun] {Binyong Sun}
\address{Academy of Mathematics and Systems Science\\
Chinese Academy of Sciences\\
Beijing, 100190, P.R. China} \email{sun@math.ac.cn}

\author [C.-B. Zhu] {Chen-Bo Zhu}
\address{Department of Mathematics\\
National University of Singapore\\
Block S17, 10 Lower Kent Ridge Road\\
Singapore 119076} \email{matzhucb@nus.edu.sg}

\subjclass[2000]{22E46 (Primary)} \keywords{Matrix coefficients;
tempered generalized functions; Casselman-Wallach representations; Harish-Chandra modules; Gelfand-Kazhdan criterion}
\thanks{}

\begin{abstract}
We formalize the notion of matrix coefficients for distributional
vectors in a representation of a real reductive group, which consist
of generalized functions on the group. As an application, we state
and prove a Gelfand-Kazhdan criterion for a real reductive group in
very general settings.
\end{abstract}

\maketitle

\section{Tempered generalized functions and Casselman-Wallach representations}

In this section, we review some basic terminologies in
representation theory, which are necessary for this article. The two
main ones are tempered generalized functions and Casselman-Wallach representations. We refer the readers to \cite{W1,W2} as general references.

Let $G$ be a real reductive Lie group, by which we mean that
\begin{itemize}
        \item[(a)]
            the Lie algebra $\g$ of $G$ is reductive;
        \item[(b)]
            $G$ has finitely many connected components; and
        \item[(c)]
            the connected Lie subgroup of $G$ with Lie algebra
            $[\g,\g]$ has a finite center.
\end{itemize}

We say that a (complex valued) function $f$ on $G$ is of moderate
growth if there is a continuous group homomorphism
\[
  \rho:\ G\rightarrow \GL_n(\BC), \ \text{for some $n\geq 1$},
\]
such that
\[
   \abs{f(x)}\leq \tr (\overline{\rho(x)}^{\,t} \rho(x))+\tr (\overline{\rho(x^{-1})}^{\,t} \rho(x^{-1})),\quad x\in
   G.
\]
Here ``$\bar{\phantom{p}}$" stands for the complex conjugation, and
``$\phantom{p}^t$" the transpose, of a matrix. A smooth function
$f\in \con^\infty(G)$ is said to be tempered if $Xf$ has moderate
growth for all $X$ in the universal enveloping algebra
$\RU(\g_\BC)$. Here and as usual, $\g_\BC$ is the complexification
of $\g$, and $\RU(\g_\BC)$ is identified with the space of all left
invariant differential operators on $G$. Denote by $\con^{\,\xi}(G)$
the space of all tempered functions on $G$.

A smooth function $f\in \con^\infty(G)$ is called Schwartz if
\[
  \abs{f}_{X,\phi}:=\mathrm{sup}_{x\in G} \,\phi(x)\,\abs{(Xf)(x)}<
  \infty
\]
for all $X\in \RU(\g_\BC)$, and all positive functions $\phi$ on $G$
of moderate growth. Denote by $\con^{\,\varsigma}(G)$ the space of
Schwartz functions on $G$, which is a nuclear Fr\'{e}chet space under the
seminorms $\{\abs{\,\cdot}_{X,\phi}\}$. (See \cite{Tr} or \cite{Ta} for the notion as well as
basic properties of nuclear Fr\'{e}chet spaces.) We define the nuclear Fr\'{e}chet
space $\RD^{\,\varsigma}(G)$ of Schwartz densities on $G$ similarly.
Fix a Haar measure $dg$ on $G$, then the map
\[
  \begin{array}{rcl}
               \con^{\,\varsigma}(G)&\rightarrow& \RD^{\,\varsigma}(G),\\
                                    f&\mapsto &f\,dg
  \end{array}
\]
is a topological linear isomorphism. We define a tempered
generalized function on $G$ to be a continuous linear functional on
$\RD^{\,\varsigma}(G)$. Denote by $\con^{-\xi}(G)$ the space of all
tempered generalized functions on $G$, equipped with the strong dual topology. This topology coincides with the topology of uniform convergence on compact subsets of $\RD^{\,\varsigma}(G)$, due to the fact that every bounded subset of a complete nuclear space is relatively compact.  Note that
$\con^{\,\xi}(G)$ is canonically identified with a dense subspace of
$\con^{-\xi}(G)$:
\[\con^{\,\xi}(G)\hookrightarrow \con^{-\xi}(G).\]

\begin{rmk} In \cite{W2}, the space $\con^{\,\varsigma}(G)$ is denoted by
$\CS (G)$ and is called the space of rapidly decreasing functions on $G$. Note that $\con^{\,\varsigma}(G)$ (or $\CS (G)$) is different from Harish-Chandra's Schwartz space of $G$, which is traditionally denoted by $\CC (G)$.
\end{rmk}

\vsp

By a representation of $G$, or just a representation when $G$ is
understood, we mean a continuous linear action of $G$ on a complete,
locally convex, Hausdorff, complex topological vector space. When no
confusion is possible, we do not distinguish a representation with
its underlying space. Let $V$ be a representation. It is said to be
smooth if the action map $G\times V\rightarrow V$ is smooth as a map
of infinite dimensional manifolds. Recall that in general, if $E$ and $F$ are two complete,
locally convex, Hausdorff, real topological vector spaces, and $U_E$ and $U_F$ are open subsets of $E$ and $F$ respectively, a continuous ($\con^0$) map $f:U_E\rightarrow U_F$ is said to be $\con^1$ if the differential
\[
  \begin{array}{rcl}
  df:U_E\times E&\rightarrow& F, \\
   x,v&\mapsto& \lim_{t\rightarrow0}\frac{f(x+tv)-f(x)}{t}
   \end{array}
\]
exists and is $\con^0$. Inductively $f$ is said to be $\con^k$ ($k\geq 2$) if it is $\con^1$ and $df$ is $\con^{k-1}$. We say that $f$ is smooth if it is $\con^k$ for all $k\geq 0$. With this notion of smoothness, we define (smooth) manifolds and smooth maps between them as in the finite dimension case.  See \cite{GN}, for example, for more details.

Denote by $\con(G;V)$ the space of $V$-valued continuous functions
on $G$. It is a complete locally convex space under the topology of
uniform convergence on compact sets. Similarly, denote by
$\con^\infty(G;V)$ the (complete locally convex) space of smooth
$V$-valued functions, with the usual smooth topology (which is defined by the seminorms $\abs{f}_{\Omega,D,\mu}:=\mathrm{sup}_{x\in \Omega} \abs{Df(x)}_\mu$, where $\Omega$ is a compact subset of $G$, $D$ is a differential operator on $G$, and $\abs{\,\cdot\,}_\mu$ is a continuous seminorm on $V$).

Define
\[
  V(\infty):=\{v\in V\mid  c_v\in \con^\infty(G;V)\},
\]
where $c_v\in \con(G;V)$ is given by $c_v(g):=gv$. This is a $G$-stable subspace of $V$.  It is easy to check that the linear map
\begin{equation}\label{orbitmap}
    V(\infty)\rightarrow \con^\infty(G;V),\quad   v\mapsto c_v
\end{equation}
is injective and has closed image. Identify $V(\infty)$ with the image of \eqref{orbitmap}, and equip on it the subspace topology of $\con^\infty(G;V)$. Then $V(\infty)$ becomes a smooth
representation of $G$, which is called the smoothing of $V$. The inclusion map $V(\infty)\rightarrow V$ is continuous since it can be identified with the map of evaluating at the identity $1\in G$. If $V$
is smooth, then this inclusion map is a homeomorphism, and hence $V(\infty)=V$ as a representation of $G$. In this
case, its differential is defined to be the continuous $\RU(\g_\BC)$
action given by
\[
  X v= (X c_v)(1), \quad X\in \RU(\g_\BC), \ v\in V.
\]

The representation $V$ is said to be $\mathrm{Z}(\g_\BC)$ finite if
a finite codimensional ideal of $\RZ(\g_\BC)$ annihilates
$V(\infty)$, where $\RZ(\g_\BC)$ is the center of $\RU(\g_\BC)$. It
is said to be admissible if every irreducible representation of a
maximal compact subgroup $K$ of $G$ has finite multiplicity in $V$.
A representation of $G$ which is both admissible and
$\mathrm{Z}(\g_\BC)$ finite is called a Harish-Chandra
representation.

The representation $V$ is said to be of moderate growth if for every
continuous seminorm $\abs{\phantom{f}}_\mu$ on $V$, there is a
positive function $\phi$ on $G$ of moderate growth, and a continuous
seminorm $\abs{\phantom{f}}_\nu$ on $V$ such that
\[
   \abs{gv}_\mu\leq \phi(g)\abs{v}_\nu,\quad \textrm{for all } g\in
   G, \  v\in V.
\]

The representation $V$ is called a Casselman-Wallach representation if the
space $V$ is Fr\'{e}chet, and the representation is smooth and of
moderate growth, and Harish-Chandra. Following Wallach (\cite{W2}), the category of all such $V$ is
denoted by $\CFH $ (the morphisms being
$G$-intertwining continuous linear maps). The strong dual of a
Casselman-Wallach representation is again a representation which
is smooth and Harish-Chandra. Representations which are isomorphic
to such strong duals form a category, which is denoted by $\CDH$. By the
Casselman-Wallach globalization theorem, both the category $\CFH$
and $\CDH$ are equivalent to the category $\CH$ of admissible
finitely generated $(\g_\C, K)$-modules  (\cite{Cass}, \cite[Chapter
11]{W2}).


From the theory of real Jacquet modules (by Casselman and Wallach), and the Casselman-Wallach globalization theorem, every Casselman-Wallach representation is the smoothing of a Hilbert representation. In addition, all representation spaces in $\CFH$ are automatically nuclear Fr\'{e}chet (and in particular are reflexive), and all morphisms in $\CFH$ and $\CDH$ are automatically topological homomorphisms with closed image. See \cite[Chapter 11]{W2}. Recall that in general, a linear map $\lambda: E\rightarrow F$ of topological vector spaces is called a topological homomorphism if the induced linear isomorphism  $E/\Ker(\lambda)\rightarrow \Im (\lambda)$ is a topological isomorphism, where $E/\Ker(\lambda)$ is equipped with the quotient topology of $E$, and the image $\Im(\lambda)$ is equipped with the subspace topology of $F$.

\section{Statement of results}

Let $U^\infty, V^\infty$ be a pair of Casselman-Wallach representations of $G$ which are contragredient to each other, i.e.,
we are given a $G$-invariant nondegenerate continuous bilinear map
\begin{equation}\label{pv1v2}
   \la\,,\,\ra: U^\infty\times V^\infty\rightarrow \BC.
\end{equation}
Note that $U^\infty$ is the only Casselman-Wallach representation whose underlying $(\g_\C,K)$-module is the contragredient of that of $V^\infty$, and vice versa.

Denote by $U^{-\infty}$ the strong dual of $V^\infty$. This is the only representation in $\CDH$ which has the same underlying $(\g_\C,K)$-module as that of $U^\infty$. Similarly, denote by $V^{-\infty}$ the strong
dual of $U^\infty$. For any $u\in U^{\infty}$, $v\in V^{\infty}$,
the (usual) matrix coefficient $c_{u\otimes v}$ is defined by
\begin{equation}
\label{dmc}
  c_{u\otimes v}(g):=\la gu,v\ra, \quad g\in G.
\end{equation}
By the moderate growth conditions of $U^\infty$ and $V^\infty$, one
easily checks that a matrix coefficient $c_{u\otimes v}$ is a tempered function on
$G$.

The following theorem, which defines the notion of matrix
coefficients for distributional vectors, is in a sense well-known.
See the work of Shilika (\cite[Section 3]{Sh}) in the context of unitary representations, and the work of
Kostant (\cite[Section 6.1]{Ko}) or Yamashita (\cite[Section 2.3]{Ya}) in the context of Hilbert representations.
With the benefit of the Casselman-Wallach theorem, it is of interest and most natural to state the result in the context of Casselman-Wallach representations. This is also partly justified by the increasing use of these representations, due to the recent progress in restriction problems for classical groups. One purpose of this note is to provide a detailed proof of this result.

\begin{thm}\label{thm1} Let $G$ be a real reductive group. Denote by $\con^{\xi}(G)$ (resp. $\con^{-\xi}(G)$) the space of all
tempered functions (resp. tempered generalized functions) on $G$. Let $(U^\infty, V^\infty)$ be a pair of Casselman-Wallach representations of $G$ which are
contragredient to each other. Then the matrix coefficient map
\begin{equation}\label{umatrix}
 \begin{array}{rcl}
 U^\infty\times V^\infty&\rightarrow& \con^{\,\xi}(G),\\
           (u,v)&\mapsto & c_{u\otimes v}
 \end{array}
\end{equation}
extends to a continuous bilinear map
\[
  U^{-\infty}\times V^{-\infty}\rightarrow \con^{-\xi}(G),
\]
and the induced $G\times G$ intertwining continuous linear map
\begin{equation}\label{mapc}
   c \,:\, U^{-\infty}\widehat \otimes V^{-\infty}\rightarrow \con^{-\xi}(G)
\end{equation}
is a topological homomorphism with closed image.
\end{thm}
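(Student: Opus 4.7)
The plan is to extend the matrix coefficient map using the convolution action of Schwartz densities, which smooths distributional vectors back into smooth vectors. For $\phi=f\,dg\in\RD^{\,\varsigma}(G)$, introduce the convolution operator
\[
  \pi(\phi)u\,:=\,\int_G f(g)\,gu\,dg.
\]
Moderate growth of $U^\infty$ together with Schwartz decay of $f$ makes $\pi(\phi)$ a continuous endomorphism of $U^\infty$; its formal adjoint across the pairing $\la\,,\,\ra$ is an analogous operator $\pi^\vee(\phi)$ on $V^\infty$. Dualizing through the identifications $U^{-\infty}=(V^\infty)'$ and $V^{-\infty}=(U^\infty)'$ extends both operators continuously to the respective spaces of distributional vectors. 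I will then \emph{define}
\[
  \la c_{u\otimes v},\,\phi\ra\,:=\,\la\pi(\phi)u,\,v\ra,\qquad u\in U^{-\infty},\ v\in V^{-\infty},\ \phi\in\RD^{\,\varsigma}(G),
\]
where the pairing on the right is between $U^\infty$ and $V^{-\infty}=(U^\infty)'$; this is meaningful precisely when $\pi(\phi)u\in U^\infty$.

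The central technical step, and the principal obstacle, is therefore to establish the \emph{smoothing property}: for every $\phi\in\RD^{\,\varsigma}(G)$ and every $u\in U^{-\infty}$, $\pi(\phi)u$ must lie in $U^\infty$, and the bilinear map
\[
  \RD^{\,\varsigma}(G)\times U^{-\infty}\to U^\infty,\qquad (\phi,u)\mapsto \pi(\phi)u,
\]
must be continuous. The strategy is to transfer derivatives from the vector onto the Schwartz density: for any $X\in\RU(\g_\BC)$ one has $X\,\pi(\phi)u=\pi(\tilde X\phi)\,u$, with $\tilde X$ an appropriate translation-invariant derivative on $G$ under which $\RD^{\,\varsigma}(G)$ is stable. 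Hence every derivative of $\pi(\phi)u$ exists in $U^{-\infty}$ and depends continuously on $(\phi,u)$; by the Casselman--Wallach identification of $U^\infty$ with the space of smooth vectors in $U^{-\infty}$ (with matching nuclear Fr\'echet topology), $\pi(\phi)u$ is forced to lie in $U^\infty$, and the requisite seminorm estimates reduce to standard moderate-growth bounds against Schwartz integrands on $G$.

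Once the smoothing property is in hand, the defining formula is jointly continuous in $(u,v,\phi)$, so $c_{u\otimes v}\in\con^{-\xi}(G)$ and the assignment $(u,v)\mapsto c_{u\otimes v}$ is continuous from $U^{-\infty}\times V^{-\infty}$ to $\con^{-\xi}(G)$. Compatibility with \eqref{dmc} on smooth vectors is immediate, since on $U^\infty\times V^\infty$ the formula unwinds to $\int_G f(g)\la gu,v\ra\,dg$. Nuclearity of the DF spaces $U^{-\infty}$ and $V^{-\infty}$ then factors the bilinear map uniquely through the completed tensor product $U^{-\infty}\wh\otimes V^{-\infty}$, producing the continuous $G\times G$-equivariant linear map $c$ of \eqref{mapc}.

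Finally, the topological-homomorphism-and-closed-image property for $c$ will follow from the Casselman--Wallach structure on the $G\times G$-representation $U^\infty\wh\otimes V^\infty$, which is itself Casselman--Wallach (nuclear Fr\'echet, smooth, of moderate growth, $\RZ(\g_\BC\oplus\g_\BC)$-finite, and $K\times K$-admissible by the tensor-product formula for isotypic components). Consequently $U^{-\infty}\wh\otimes V^{-\infty}$ lies in $\CDH(G\times G)$. Since $c$ is $G\times G$-equivariant, its kernel is a closed invariant subspace of a $\CDH$-object, hence itself in $\CDH$, and the induced quotient belongs to $\CDH(G\times G)$ and carries its intrinsic topology via the automatic topological-homomorphism property of morphisms in $\CDH$ recalled in Section~1. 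Comparing $(K\times K)$-finite parts and invoking uniqueness of Casselman--Wallach globalization then identifies this quotient topologically with the image of $c$ inside $\con^{-\xi}(G)$, which is thereby closed; the topological-homomorphism property of $c$ follows from that of the quotient map composed with this identification.
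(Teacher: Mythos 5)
Your strategy for the first assertion (convolve distributional vectors with Schwartz densities, transfer derivatives onto the density via $X\pi(\phi)u=\pi(\tilde X\phi)u$, and conclude membership in $U^\infty$) is the same basic mechanism the paper uses, though the paper routes through an auxiliary Hilbert globalization $U$ rather than arguing directly that the smooth vectors of $U^{-\infty}$ coincide with $U^\infty$. You also gloss over the passage from separate continuity to continuity of the bilinear map $(u,v)\mapsto c_{u\otimes v}$; this is not automatic, and the paper invokes the specific fact (Treves, Theorem 41.1) that separately continuous bilinear forms on products of strong duals of reflexive Fr\'echet spaces are jointly continuous. Your trilinear ``joint continuity in $(u,v,\phi)$'' claim is stronger than what is needed and is not justified as stated.

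The genuine gap is in your treatment of the second assertion. You propose to show that the quotient $(U^{-\infty}\widehat\otimes V^{-\infty})/\Ker(c)$, an object of $\CDH(G\times G)$, is ``identified topologically with the image of $c$'' by comparing $(K\times K)$-finite parts and invoking uniqueness of Casselman--Wallach globalizations. But to apply that uniqueness you would need to know in advance that $\Im(c)$, with its subspace topology in $\con^{-\xi}(G)$, is itself an object of $\CDH(G\times G)$ --- in particular, closed and complete, with a controlled topology --- and that is precisely (equivalent to) the assertion you are trying to prove. The argument is circular as written: the quotient map is automatically a topological homomorphism, but the continuous bijection from the quotient onto the image could fail to be a homeomorphism in the DF-space setting, where one cannot freely invoke an open mapping theorem. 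The paper sidesteps this by going to the predual side: it observes that $c$ is the transpose of a $G\times G$-map $c^t:\RD^{\,\varsigma}(G)\to V^\infty\widehat\otimes U^\infty$ between nuclear \emph{Fr\'echet} spaces, shows $c^t$ is a topological homomorphism (its target is Casselman--Wallach and its source is Fr\'echet, smooth, of moderate growth --- Lemma \ref{top1}), and then cites a Bourbaki duality theorem (Lemma \ref{top2}) to transfer the topological-homomorphism and closed-image properties back to $c$. You would need to supply an analogous duality step, or prove a DF open-mapping statement, to close your argument.
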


Here ``$\widehat \otimes$" stands for the completed projective
tensor product of Hausdorff locally convex topological vector
spaces. In our case, this coincides with the completed epsilon
tensor product as the spaces involved are nuclear. Recall again that
a linear map $\lambda: E\rightarrow F$ of topological vector spaces
is called a topological homomorphism if the induced linear
isomorphism $E/\Ker(\lambda)\rightarrow \Im (\lambda)$ is a
topological isomorphism, where $E/\Ker(\lambda)$ is equipped with
the quotient topology of $E$, and the image $\Im(\lambda)$ is
equipped with the subspace topology of $F$. The action of $G\times
G$ on $\con^{-\xi}(G)$ is obtained by continuously extending its
action on $\con^{\xi}(G)$:
\[
  ((g_1,g_2)f)(x):=f(g_2^{-1}xg_1).
\]

\begin{rmk}\label{rch}
(A) Denote by $t_{U_\infty}$ the paring $V^\infty\times U^\infty\rightarrow \C$, and view it as an element of $U^{-\infty}\widehat \otimes V^{-\infty}$. Then $c(t_{U_\infty})\in \con^{-\xi}(G)$ is the character of the representation $U^\infty$.

\vvsp

\noindent (B) Let $U_1^\infty, U_2^\infty, \cdots, U_k^\infty$ be pairwise inequivalent irreducible Casselman-Wallach representations of $G$. Let $V_i^\infty$ be a Casselman-Wallach  representation of $G$ which is contragredient to $U_i^\infty$, $i=1,2,\cdots, k$. Then the second assertion of Theorem \ref{thm1} implies that the sum
\[
  \bigoplus_{i=1}^k U_i^{-\infty}\widehat \otimes V_i^{-\infty} \rightarrow \con^{-\xi}(G)
\]
of the matrix coefficient maps is a topological embedding with closed image.
\end{rmk}

\vsp A second purpose of this note (and additional reason for writing
down a proof of Theorem \ref{thm1}) is to prove the following
generalized form of the Gelfand-Kazhdan criterion. For applications
towards uniqueness of certain degenerate Whittaker models, it is
highly desirable (and in fact necessary) to have the most general form of the
Gelfand-Kazhdan criterion. We refer the reader to \cite{JSZ} for one
such application.

\begin{thm}\label{gelfand} Let $S_1$ and $S_2$ be two closed subgroups of $G$,
with continuous characters
\[
  \chi_i: S_i\rightarrow \BC^\times,\quad i=1,2.
\]
\begin{itemize}
\item[(a)]
Assume that there is a continuous anti-automorphism $\sigma$ of $G$
such that for every $f\in \con^{-\xi}(G)$ which is an eigenvector of
$\operatorname{U}(\gC)^G$, the conditions
\[
  f(sx)=\chi_{1}(s)f(x), \quad s\in S_1,
\]
and
\[
  f(xs)=\chi_{2}(s)^{-1}f(x), \quad s\in S_2
\]
imply that
\[
   f(x^\sigma)=f(x).
\]
Then for any pair of irreducible Casselman-Wallach representations $(U^\infty, V^\infty)$ of $G$ which are contragredient to
each other, one has that
\begin{equation*}\label{dhom}
  \dim \Hom_{S_1}(U^\infty, \C_{\chi_{1}}) \, \dim \Hom_{S_2}
  (V^\infty,\C_{\chi_{2}})\leq 1.
\end{equation*}
\item[(b)]
Assume that for every $f\in \con^{-\xi}(G)$ which is an eigenvector
of $\operatorname{U}(\g)^G$, the conditions
\[
  f(sx)=\chi_{1}(s)f(x), \quad s\in S_1,
\]
and
\[
  f(xs)=\chi_{2}(s)^{-1}f(x), \quad s\in S_2
\]
imply that
\[
   f=0.
\]
Then for any pair of irreducible Casselman-Wallach representations $(U^\infty, V^\infty)$ of $G$ which are contragredient to
each other, one has that
\begin{equation*}
  \dim \Hom_{S_1}(U^\infty, \C_{\chi_{1}}) \,
  \dim \Hom_{S_2}(V^\infty,\C_{\chi_{2}})=0.
\end{equation*}
\end{itemize}
\end{thm}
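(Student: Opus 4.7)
Given $\ell_1 \in \Hom_{S_1}(U^\infty, \C_{\chi_1})$ and $\ell_2 \in \Hom_{S_2}(V^\infty, \C_{\chi_2})$, I would regard them as distributional vectors $\ell_1 \in V^{-\infty} = (U^\infty)'$ and $\ell_2 \in U^{-\infty} = (V^\infty)'$, and form the generalized matrix coefficient $f_{\ell_1,\ell_2} := c(\ell_2 \otimes \ell_1) \in \con^{-\xi}(G)$ via Theorem~\ref{thm1}. The $G\times G$-intertwining property of $c$ combined with the transformation laws of $\ell_1, \ell_2$ yields the left $(S_1,\chi_1)$- and right $(S_2,\chi_2^{-1})$-equivariance of $f_{\ell_1,\ell_2}$; since $U^\infty$ is irreducible it has an infinitesimal character, so $f_{\ell_1,\ell_2}$ is an $\operatorname{U}(\g_\C)^G$-eigenvector. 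For part~(b), the hypothesis immediately forces $f_{\ell_1,\ell_2} = 0$ for all $(\ell_1,\ell_2)$; by Remark~\ref{rch}(B) with $k=1$, $c$ is a topological embedding, hence injective, so $\ell_2 \otimes \ell_1 = 0$ and either $\Hom_{S_1}(U^\infty,\C_{\chi_1})$ or $\Hom_{S_2}(V^\infty,\C_{\chi_2})$ must vanish.

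For part~(a), the hypothesis yields only $(\sigma^* f_{\ell_1,\ell_2})(x) := f_{\ell_1,\ell_2}(x^\sigma) = f_{\ell_1,\ell_2}(x)$. The central move is to reinterpret $\sigma^* \circ c$ as the matrix coefficient map of a twisted contragredient pair. Put $\tau(g) := g^{-\sigma}$; since $\sigma$ is an anti-automorphism, $\tau$ is an automorphism of $G$. Equip $U^\infty, V^\infty$ with the $\tau$-twisted actions $\pi^\tau(g) := \pi(g^{-\sigma})$; the pair $(V^{\tau,\infty}, U^{\tau,\infty})$ together with the swap pairing $\langle v,u\rangle' := \langle u,v\rangle$ is another contragredient pair of irreducible Casselman--Wallach representations, with matrix coefficient map $c^\flat : V^{-\infty} \widehat\otimes U^{-\infty} \to \con^{-\xi}(G)$. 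A short calculation on smooth tensors using $G$-invariance of the pairing, extended by continuity (using density of $U^\infty \otimes V^\infty$ in $U^{-\infty} \widehat\otimes V^{-\infty}$), yields the key identity $\sigma^* \circ c = c^\flat \circ \mathrm{sw}$, where $\mathrm{sw}(\alpha\otimes\beta) := \beta\otimes\alpha$. The $\sigma$-invariance of $f_{\ell_1,\ell_2}$ then becomes $c(\ell_2 \otimes \ell_1) = c^\flat(\ell_1 \otimes \ell_2)$.

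The proof finishes by dichotomy on whether $U^\infty \cong V^{\tau,\infty}$ as representations. If not, Remark~\ref{rch}(B) applied to the pairwise inequivalent irreducibles $\{U^\infty, V^{\tau,\infty}\}$ shows $c \oplus c^\flat$ is a topological embedding, so the displayed identity forces $\ell_2 \otimes \ell_1 = 0$, and hence either $\ell_1$ or $\ell_2$ vanishes. If $U^\infty \cong V^{\tau,\infty}$, Schur's lemma supplies a $G$-intertwiner $\phi$, unique up to scalar, whose dual and the induced iso between the contragredient pairs lift to continuous linear isomorphisms $A: V^{-\infty} \to U^{-\infty}$ and $B: U^{-\infty} \to V^{-\infty}$ satisfying $c^\flat = c \circ (A \otimes B)$. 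Injectivity of $c$ then converts the displayed identity into the rank-one equation $\ell_2 \otimes \ell_1 = A(\ell_1) \otimes B(\ell_2)$ in $U^{-\infty} \widehat\otimes V^{-\infty}$, and the standard uniqueness of rank-one tensor decompositions forces $\ell_2 \in \C \cdot A(\ell_1)$, so each Hom space has dimension at most one and the product is $\leq 1$. The main obstacle is establishing the key identity $\sigma^* \circ c = c^\flat \circ \mathrm{sw}$ and, in the isomorphic case, the factorization $c^\flat = c \circ (A \otimes B)$: both are elementary on smooth tensors but require careful continuous extension to distributional vectors, which is exactly what the topological embedding assertion of Theorem~\ref{thm1} makes available.
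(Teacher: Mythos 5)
Your proof is correct, but it takes a genuinely different route from the paper's. The paper argues on the \emph{domain} side of the matrix-coefficient construction: writing $u_0\in\Hom_{S_2}(V^\infty,\C_{\chi_2})\subset U^{-\infty}$ and $v_0\in\Hom_{S_1}(U^\infty,\C_{\chi_1})\subset V^{-\infty}$, it uses the $\sigma$-invariance of $c_{u_0\otimes v_0}$ together with irreducibility of both $U^\infty$ and $V^\infty$ to show (Lemma~\ref{iff}) that the kernel $J$ of $\omega\mapsto\omega u_0\colon\RD^{\,\varsigma}(G)\to U^\infty$ equals $\{\omega : (\sigma_*\omega)^\vee v_0=0\}$, hence depends only on $v_0$; since $\RD^{\,\varsigma}(G)/J$ is a Casselman--Wallach representation (Lemma~\ref{top1}), Schur's lemma applied to the two injections $\bar\Phi,\bar\Phi'\colon\RD^{\,\varsigma}(G)/J\hookrightarrow U^\infty$ forces any two such $u_0$, $u_0'$ to be proportional. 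You instead argue on the \emph{image} side: you twist the contragredient pair by the automorphism $\tau(g)=g^{-\sigma}$, establish $\sigma^*\circ c = c^\flat\circ\mathrm{sw}$, and then split according to whether $U^\infty\cong V^{\tau,\infty}$, in the first case forcing $\ell_2\otimes\ell_1=0$ and in the second reducing to the rank-one tensor identity $\ell_2\otimes\ell_1 = A(\ell_1)\otimes B(\ell_2)$. Both proofs are valid, and neither is clearly simpler: the paper's avoids any case analysis and only needs the first assertion of Theorem~\ref{thm1} (existence and continuity of $c$) together with Lemma~\ref{top1} and Schur, whereas yours avoids the hands-on kernel computation of Lemma~\ref{iff} but relies essentially on the topological embedding assertion of Theorem~\ref{thm1} via Remark~\ref{rch}(B) --- specifically on the \emph{injectivity} of $c$ and of $c\oplus c^\flat$, not merely on ``continuous extension'' as your last sentence suggests (the extensions themselves follow from density and the first assertion alone). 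One step you should spell out in Case~2: if $\phi\colon V^{\tau,\infty}\to U^\infty$ and $\psi\colon U^{\tau,\infty}\to V^\infty$ are chosen (Schur) compatibly with the pairings, i.e.\ $\la\phi(v),\psi(u)\ra=\la u,v\ra$, then the desired extensions are $A=(\psi^{-1})^t$ and $B=(\phi^{-1})^t$; the compatibility of pairings is precisely what makes these transposes restrict to $\phi$ and $\psi$ on smooth vectors, so that $c^\flat = c\circ(A\otimes B)$ holds first on $V^\infty\otimes U^\infty$ and then, by density, on $V^{-\infty}\widehat\otimes U^{-\infty}$.
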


Here and as usual, $\operatorname{U}(\g_\BC)^G$ is identified with
the space of bi-invariant differential operators on $G$,
$\C_{\chi_{i}}$ is the one dimensional representation of $S_i$ given
by the character $\chi_i$, and ``$\Hom_{S_i}$" stands for continuous
$S_i$ homomorphisms. The equalities in the theorem are to be
understood as equalities of generalized functions. For example,
$f(sx)$ denotes the left translation of $f$ by $s^{-1}$. Similar
notations apply throughout this article.

\begin{rmk}
The original Gelfand-Kazhdan criterion is in \cite{GK} (for the non-archimedean case), and their idea has been very influential ever since. Various versions for real reductive groups have appeared in the literature, including \cite{Sh} and \cite{Ko} (for the study of Whittaker models, but both implicitly). Later works which state some versions of Gelfand-Kazhdan criterion explicitly include that of H. Yamashita (\cite[Theorem 2.10]{Ya}), and of A. Aizenbud, D. Gourevitch and E. Sayag (\cite[Section 2]{AGS}). The papers \cite{Sh} and \cite{Ya} have been particularly instructive for the current article.
\end{rmk}

As a consequence of Part (a) of Theorem \ref{gelfand}, we have the following criterion of a strong Gelfand pair.
\begin{cor}\label{sgel}
Let $G'$ be a reductive closed subgroup of the real reductive group $G$. Let $\sigma$ be
a continuous anti-automorphism of $G$ such that $\sigma(G')=G'$.
Assume that for every $f\in \con^{-\xi}(G)$, the
condition
\[
   f(gxg^{-1})=f(x)\quad \textrm{ for all } g\in G'
\]
implies that
\[
  f(x^\sigma)=f(x).
\]
Then for all irreducible Casselman-Wallach representation $V$ of
$G$, and $V'$ of $G'$, the space of $G'$-invariant continuous bilinear functionals on $V\times V'$ is at most one dimensional.
\end{cor}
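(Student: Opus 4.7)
The plan is to deduce the corollary from Theorem~\ref{gelfand}(a), applied to the product group $H := G \times G'$, with $S_1 = S_2 := \Delta G' = \{(g',g') : g' \in G'\}$ the diagonal subgroup, trivial characters, and the anti-automorphism $\tau$ of $H$ defined by $\tau(x,y) := (\sigma(x), \sigma(y))$. The contragredient pair of irreducible Casselman-Wallach representations of $H$ will be $(V \widehat\otimes V',\, \widetilde V \widehat\otimes \widetilde{V'})$, whose irreducibility as an external tensor product of irreducibles is standard for admissible representations.

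The heart of the argument is a geometric reduction. The map $\phi \colon G \times G' \to G$ given by $\phi(x,y) := y^{-1}x$ is a submersion, and its pullback $\phi^*$ induces a topological isomorphism from the $G'$-conjugation-invariants in $\con^{-\xi}(G)$ onto the bi-$\Delta G'$-invariants in $\con^{-\xi}(G \times G')$. Writing $f = \phi^* h$, one computes
\[
  f(\tau(x,y)) \;=\; h(\sigma(y)^{-1}\sigma(x)) \;=\; h(\sigma(y^{-1}x)) \;=\; (h \circ \sigma)(y^{-1}x),
\]
so $\tau$-invariance of $f$ corresponds precisely to $\sigma$-invariance of $h$. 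The corollary's hypothesis supplies this $\sigma$-invariance for every $G'$-conjugation-invariant $h$ (indeed for all such $h$, not merely for eigenvectors of the bi-invariant differential operators, which is strictly stronger than what Theorem~\ref{gelfand}(a) requires), so that theorem applies and yields
\[
  \dim \Hom_{\Delta G'}(V \widehat\otimes V', \BC) \cdot \dim \Hom_{\Delta G'}(\widetilde V \widehat\otimes \widetilde{V'}, \BC) \le 1.
\]

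To upgrade this product bound to $\dim \Hom_{\Delta G'}(V \widehat\otimes V', \BC) \le 1$, I would show that the two factors are equal. Applying the hypothesis to the global character $\Theta_V \in \con^{-\xi}(G)$, which is $G$-conjugation-invariant and hence $G'$-conjugation-invariant, yields $\Theta_V \circ \sigma = \Theta_V$; so the $\sigma$-twisted representation $V^\sigma$, carrying the action $g \mapsto \rho_V(\sigma(g)^{-1})$, has character $\Theta_V(\sigma(g)^{-1}) = \Theta_V(g^{-1}) = \Theta_{\widetilde V}(g)$, and since irreducible Casselman-Wallach representations are determined by their characters, $V^\sigma \cong \widetilde V$. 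The analogous isomorphism $V'^\sigma \cong \widetilde{V'}$ follows by pushing $\Theta_{V'}$ forward along the closed embedding $G' \hookrightarrow G$ (producing a $G'$-conjugation-invariant tempered generalized function on $G$), invoking the hypothesis, and restricting back to $G'$. Because $\sigma(G') = G'$, the identity on underlying vector spaces identifies $G'$-invariant continuous bilinear forms on $V \times V'$ with those on $V^\sigma \times V'^\sigma \cong \widetilde V \times \widetilde{V'}$; the two factors in the product bound thus coincide, collapsing it to the desired inequality.

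The step I expect to be most delicate is the geometric reduction itself, specifically verifying that $\phi^*$ gives a topological isomorphism onto the bi-$\Delta G'$-invariants in $\con^{-\xi}(G \times G')$ and that temperedness is preserved under pullback along the equivariant submersion $\phi$. This should follow from a standard slice or descent argument exploiting the principal $\Delta G'$-bundle structure of $\phi$.
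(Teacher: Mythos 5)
Your approach is the same as the paper's: apply Theorem~\ref{gelfand}(a) to $H = G \times G'$, with $S_1 = S_2$ the diagonally embedded $G'$, trivial characters, and the componentwise anti-automorphism, by reducing bi-invariance on $H$ to $G'$-conjugation-invariance on $G$. Your map $\phi(x,y)=y^{-1}x$, $H\to G$, is a slice of the paper's surjective submersion $m_H\colon S\times G\times S\to H$, $(s_1,g,s_2)\mapsto s_1gs_2$ used in Lemma~\ref{tauinv}, and the two descent arguments are logically equivalent; likewise, your character argument showing $V^\sigma\cong\widetilde V$ and $V'^\sigma\cong\widetilde{V'}$ and using $\sigma(G')=G'$ to equate the two $\Hom$ factors is precisely the content of the paper's Lemmas~\ref{invh} and~\ref{contrah}.

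One computational slip should be corrected. Since $\sigma$ is an \emph{anti}-automorphism, $\sigma(y)^{-1}\sigma(x)=\sigma(y^{-1})\sigma(x)=\sigma(xy^{-1})$, not $\sigma(y^{-1}x)$, so the middle equality in your displayed chain is false as written. Your asserted conclusion --- that $\tau$-invariance of $f=\phi^*h$ corresponds to $\sigma$-invariance of $h$ --- is still correct, but you must invoke the $G'$-conjugation-invariance of $h$ once more: $xy^{-1}$ and $y^{-1}x$ are conjugate by $y^{-1}\in G'$, so $h\bigl(\sigma(xy^{-1})\bigr)=h(y^{-1}x)$ for all $(x,y)$ reduces, via $h(y^{-1}x)=h(xy^{-1})$ and surjectivity of $(x,y)\mapsto xy^{-1}$, to $h\circ\sigma=h$. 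Without that extra step the identity you wrote does not hold, so the equivalence is not "immediate from the formula" as the proposal suggests.
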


\section{Proof of Theorem \ref{thm1}}
Let $(U, \la\,,\,\ra_U)$ be a Hilbert space which carries a
continuous representation of $G$ so that its smoothing coincides
with $U^\infty$.
Denote by $V$ the strong dual of $U$, which carries a representation
of $G$. (Its topology is given by the inner product
\[
  \la \bar{u_1},\bar{u_2}\ra_V:=\la u_2,u_1\ra_U, \quad u_1,u_2\in
  U,
\]
where $\bar{u_i}\in V$ is the linear functional $\la \,\cdot\,,
u_i\ra_U$ on $U$.) Note that the smoothing of $V$ coincides with
$V^\infty$. Recall, as is well-known, that the three pairs $U$ and
$V$, $U^{\infty}$ and $V^{-\infty}$, and $U^{-\infty}$ and
$V^{\infty}$, are strong duals of each other as representations of
$G$. For $u\in U$, $v\in V$, set
\[
  \abs{u}_U:=\sqrt{\la u,u\ra_U} \quad\textrm{ and }\quad \abs{v}_V
  :=\sqrt{\la v,v\ra_V}.
\]

\begin{lem}\label{absc}
There is a continuous seminorm $\abs{\,\cdot}_G$ on
$\RD^{\,\varsigma}(G)$ such that
\[
  \int_G \abs{f(g)} \,\abs{gu}_U\,dg\leq
  \abs{\omega}_G\,\abs{u}_U,\quad \omega=f\,dg\in
  \RD^{\,\varsigma}(G),\,
  u\in U.
\]
\end{lem}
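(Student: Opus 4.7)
The plan is to reduce the claimed inequality to a standard estimate on Schwartz densities, modulo the fact that the Hilbert representation $U$ is itself of moderate growth. Concretely, we first produce a positive function $\phi$ on $G$ of moderate growth satisfying
\[
\abs{gu}_U\leq \phi(g)\,\abs{u}_U,\quad g\in G,\ u\in U,
\]
and then bound $\int_G \abs{f(g)}\,\phi(g)\,dg$ by a continuous seminorm on $\RD^{\,\varsigma}(G)$.

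For the first step, since the smoothing of $U$ is the Casselman-Wallach representation $U^\infty$, the Hilbert representation $U$ is admissible and of finite length. By a classical growth estimate going back to Harish-Chandra and Wallach (see \cite[Chapter 2]{W1}), the operator norm of $g$ acting on such a Hilbert space has moderate growth, yielding the required $\phi$.

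For the second step, fix an auxiliary positive function $\psi$ on $G$ of moderate growth with $\int_G dg/\psi(g)<\infty$; such a $\psi$ exists by standard estimates on a real reductive group (e.g.\ via the $KAK$ decomposition, a sufficiently large power of $g\mapsto \norm{\Ad(g)}$ plus a constant will do). Since $\psi\phi$ is again of moderate growth, the Schwartz seminorm $\abs{f}_{1,\psi\phi}=\sup_{g\in G}\psi(g)\phi(g)\abs{f(g)}$ is continuous on $\con^{\,\varsigma}(G)$, and from the pointwise bound $\abs{f(g)}\leq \abs{f}_{1,\psi\phi}/(\psi(g)\phi(g))$ we derive
\[
\int_G \abs{f(g)}\,\abs{gu}_U\,dg \;\leq\; \abs{u}_U\int_G \abs{f(g)}\,\phi(g)\,dg \;\leq\; \abs{u}_U\cdot \abs{f}_{1,\psi\phi}\cdot \int_G \frac{dg}{\psi(g)}.
\]
Taking $\abs{\omega}_G$ to be a constant multiple of $\abs{f}_{1,\psi\phi}$ and transporting through the topological isomorphism $\con^{\,\varsigma}(G)\to \RD^{\,\varsigma}(G)$ produces the desired continuous seminorm.

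The main obstacle is the first step: for a general continuous Banach representation of a Lie group only local boundedness of the operator norm is automatic, so the global moderate growth really does require the admissibility and Harish-Chandra inputs inherited from the Casselman-Wallach globalization. Once that is granted, the remaining manipulations with Schwartz seminorms and integrable weights are routine.
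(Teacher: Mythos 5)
Your proposal takes essentially the same approach as the paper's: the paper gives a two-line sketch citing exactly the two facts you use, namely that the Hilbert representation $U$ is of moderate growth (\cite[Lemma 2.A.2.2]{W1}) and that there is a positive function of moderate growth on $G$ whose reciprocal is integrable (\cite[Lemma 2.A.2.4]{W1}). Your calculation with the Schwartz seminorm $\abs{f}_{1,\psi\phi}$ is the natural way to combine these, and the resulting seminorm on $\RD^{\,\varsigma}(G)$ is exactly what the lemma requires.

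However, your final paragraph misdiagnoses the first fact. You assert that the moderate growth of $U$ ``really does require the admissibility and Harish-Chandra inputs inherited from the Casselman-Wallach globalization.'' This is not the case. The estimate cited by the paper, \cite[Lemma 2.A.2.2]{W1}, applies to \emph{every} continuous Banach (hence every Hilbert) representation of a real reductive group, with no admissibility or $\RZ(\g_\BC)$-finiteness hypothesis; it follows from local uniform boundedness of $g\mapsto \norm{\pi(g)}$ on a compact neighborhood of the identity, submultiplicativity of the operator norm, and the structure theory of $G$ (the $KAK$ decomposition and the polynomial growth of a split torus). The parenthetical ``(automatically)'' in the paper's proof is there precisely to record that nothing beyond continuity of the Hilbert representation is needed. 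Since the stronger hypotheses do in fact hold for your $U$, this misattribution does not invalidate the estimate, but the explanatory claim is false and worth correcting.
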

\begin{proof}
This is well known, and follows easily from the facts that
\begin{itemize}
 \item[(a)]
  $U$ is (automatically) of moderate growth (\cite[Lemma 2.A.2.2]{W1}), and
 \item[(b)]
  there is a positive continuous function $\phi$ on $G$ of moderate growth
  so that $1/\phi$ is integrable. See \cite[Lemma 2.A.2.4]{W1}.
\end{itemize}
\end{proof}

By Lemma \ref{absc}, for any $\omega\in \RD^{\,\varsigma}(G)$ and
$u\in U$, the integral (in the sense of Riemann)
\begin{equation}\label{omegau}
  \omega u:=\int_G \omega(g) \,gu
\end{equation}
converges absolutely, and thus defines a vector in $U$. Furthermore,
the bilinear map
\begin{equation}\label{b1}
    \RD^{\,\varsigma}(G)\times U\rightarrow U,\quad(\omega, u)\mapsto \omega u
 \end{equation}
is continuous.

\begin{lem}\label{smooth}
For $\omega\in \RD^{\,\varsigma}(G)$ and $u\in U$, we have $\omega
u\in U^{\infty}$.
\end{lem}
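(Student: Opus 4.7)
My plan is to identify the orbit map $c_{\omega u} \colon g_0 \mapsto g_0(\omega u)$ with the composition of a smooth $\RD^{\,\varsigma}(G)$-valued map with a continuous linear map into $U$, so that the smoothness of $c_{\omega u}$ follows from the fact that continuous linear maps carry smooth maps to smooth maps.

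First I would fix a left Haar measure $dg$ and write $\omega = f\,dg$ with $f \in \con^{\,\varsigma}(G)$, so that by \eqref{omegau} one has $\omega u = \int_G f(g)\, gu\, dg$. Using left-invariance of $dg$ and the linearity of the absolutely convergent Riemann integral, I would carry out the change of variables $h = g_0 g$ to obtain the identity
\[
   g_0(\omega u) \;=\; \int_G f(g_0^{-1} h)\, hu\, dh \;=\; (L_{g_0}\omega)\,u,
\]
where $L_{g_0}\omega := (L_{g_0}f)\,dg$ and $L_{g_0}f(h) := f(g_0^{-1} h)$. Thus $c_{\omega u}(g_0) = (L_{g_0}\omega)\, u$.

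Next I would invoke two ingredients. The first is that the bilinear map \eqref{b1} is continuous, so fixing the variable $u$ yields a continuous linear map $\RD^{\,\varsigma}(G) \to U$, $\omega' \mapsto \omega' u$. The second, and main, ingredient is that the orbit map
\[
   L_\bullet\,\omega \colon G \to \RD^{\,\varsigma}(G), \qquad g_0 \mapsto L_{g_0}\omega,
\]
is smooth. Granting both, the identity above presents $c_{\omega u}$ as the composition of a smooth map $G \to \RD^{\,\varsigma}(G)$ with a continuous linear map $\RD^{\,\varsigma}(G) \to U$; such a composition is smooth, which gives $\omega u \in U^\infty$.

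The main obstacle is the smoothness of $g_0 \mapsto L_{g_0}\omega$ in $\RD^{\,\varsigma}(G)$. I would reduce this to the analogous smoothness at the level of Schwartz functions $\con^{\,\varsigma}(G)$, which in turn reduces to two verifications: continuity of the left regular action and existence of all partial derivatives with values again in $\con^{\,\varsigma}(G)$. Concretely, for $X\in \g$ one checks that the difference quotient $\tfrac{1}{t}(L_{\exp(tX)}f - f)$ converges in the Schwartz topology to the right-invariant derivative $-\widetilde{X}f$, where $\widetilde{X}$ is the right-invariant vector field extending $X$; the key point here is that right-invariant differential operators preserve $\con^{\,\varsigma}(G)$, which holds because $\Ad(g)$ has moderate growth on the reductive group $G$, so that conjugation carries left-invariant differential operators into operators with moderate-growth Schwartz coefficients. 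Iterating, all higher derivatives exist and lie in $\con^{\,\varsigma}(G)$, and standard Taylor-remainder estimates with the Schwartz seminorms give full smoothness of the orbit map. Transporting this via the isomorphism $\con^{\,\varsigma}(G)\xrightarrow{\,\sim\,}\RD^{\,\varsigma}(G)$, $f\mapsto f\,dg$, yields the required smoothness, completing the proof.
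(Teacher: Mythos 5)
Your proposal takes essentially the same route as the paper. The paper's proof also hinges on the identity $c_{\omega u}(g) = (L_g\omega)u$ (its equation (\ref{comega})) together with smoothness of the left-translation representation $L$ on $\RD^{\,\varsigma}(G)$; the only difference is that the paper declares the smoothness of $L$ ``routine to check,'' whereas you usefully sketch the verification — difference quotients of the left regular action converging in the Schwartz topology to right-invariant derivatives, with preservation of $\con^{\,\varsigma}(G)$ under such operators following from the moderate growth of $\Ad$ on a reductive group. Both your change-of-variables computation and your composition-of-smooth-maps argument are correct, so this is a sound filling-in of the step the paper leaves to the reader.
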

\begin{proof}
Denote by $L$ the representation of $G$ on $\RD^{\,\varsigma}(G)$ by
left translations. Thus for $g\in G$ and $\omega\in
\RD^{\,\varsigma}(G)$, $L_g(\omega)$ is the push forward of $\omega$
via the map
\[
     G\rightarrow G,\quad x\mapsto gx.
\]
It is routine to check that
\[
  L: \ G\times \RD^{\,\varsigma}(G)\rightarrow \RD^{\,\varsigma}(G)
\]
is a smooth representation. For $X\in \RU(\g_\BC)$, denote by
\[
  L_X:\ \RD^{\,\varsigma}(G)\rightarrow \RD^{\,\varsigma}(G)
\]
its differential. Trivially we have
\begin{equation}\label{comega}
  c_{\omega u}(g)=(L_g(\omega))u,\quad g\in G, u\in U.
\end{equation}
This implies that $c_{\omega u}\in \con^{\infty}(G;U)$, namely
$\omega u\in U^\infty$.
\end{proof}

The following two lemmas are refinements of \cite[Proposition
3.2]{Sh}.

\begin{lem}\label{smoothing1}
The bilinear map
\[
  \begin{array}{rcl}
   \Phi _U: \ \ \RD^{\,\varsigma}(G)\times U&\rightarrow &U^\infty,\\
  (\omega, u)&\mapsto &\omega u.
  \end{array}
\]
is continuous.
\end{lem}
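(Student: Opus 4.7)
The plan is to reduce the claimed continuity of $\Phi_U\colon \RD^{\,\varsigma}(G)\times U \to U^\infty$ to the already-known continuity of the bilinear map \eqref{b1} by means of the identity $c_{\omega u}(g)=(L_g\omega)u$ from \eqref{comega}. Write $\Phi_U^0\colon \RD^{\,\varsigma}(G)\times U \to U$ for the continuous bilinear map of \eqref{b1}. By definition the topology on $U^\infty$ is inherited via $v\mapsto c_v$ from $\con^\infty(G;U)$, and because the left-invariant vector fields globally trivialize the tangent bundle of $G$, the smooth topology on $\con^\infty(G;U)$ is generated by the seminorms $f\mapsto \sup_{g\in K}|(Xf)(g)|_U$ as $X$ ranges over $\RU(\gC)$ and $K$ over compact subsets of $G$. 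It therefore suffices to dominate $\sup_{g\in K}|(Xc_{\omega u})(g)|_U$ by a product of continuous seminorms in $\omega$ and $u$.

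The heart of the argument will be the identity
\[
   (Xc_{\omega u})(g) \;=\; \Phi_U^0(L_g\,L_X\omega,\,u), \qquad X\in \RU(\gC),
\]
where $L_X\colon \RD^{\,\varsigma}(G)\to \RD^{\,\varsigma}(G)$ is the continuous linear map induced by $X$ through the differential of $L$. This comes from differentiating $c_{\omega u}(g)=\Phi_U^0(L_g\omega,u)$ in $g$ and pulling the derivative inside the continuous bilinear $\Phi_U^0$; the step is legitimate because $L\colon G\times \RD^{\,\varsigma}(G)\to \RD^{\,\varsigma}(G)$ is smooth, as recorded in the proof of Lemma \ref{smooth}. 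Equivalently, it amounts to differentiating under the Riemann integral in the definition \eqref{omegau} of $\omega u$, which is in turn justified by the absolute convergence supplied by Lemma \ref{absc}. I expect this to be the only substantive point in the proof.

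Granted the identity, continuity will be routine. Joint continuity of $\Phi_U^0$ provides a continuous seminorm $p$ on $\RD^{\,\varsigma}(G)$ with $|\Phi_U^0(\eta,u)|_U \le p(\eta)\,|u|_U$, and joint continuity of $L$ ensures that for any compact $K\subset G$ the assignment $\omega\mapsto \sup_{g\in K} p(L_g\,L_X\omega)$ is a continuous seminorm $q$ on $\RD^{\,\varsigma}(G)$. Combining these estimates yields
\[
   \sup_{g\in K}|(Xc_{\omega u})(g)|_U \;\le\; q(\omega)\,|u|_U,
\]
which is exactly the bound required to conclude that $\Phi_U$ is jointly continuous into $U^\infty$.
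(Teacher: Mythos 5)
Your proposal is correct and follows essentially the same route as the paper's own proof: both reduce the matter to the identity $X(c_{\omega u}) = c_{(L_X\omega)u}$ (equivalently, $(Xc_{\omega u})(g) = \Phi_U^0(L_g L_X\omega, u)$) together with the already-established continuity of the bilinear map \eqref{b1}. You merely make explicit the final seminorm estimate that the paper subsumes under the phrase ``this is clearly true.''
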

\begin{proof}
By the defining topology on $U^\infty$, we need to show that the map
\[
   \RD^{\,\varsigma}(G)\times U\rightarrow \con^\infty(G;U),\quad(\omega, u)\mapsto c_{\omega u}.
\]
is continuous. In view of the topology on $\con^\infty(G;U)$, this
is equivalent to showing that the bilinear map
\[
      \RD^{\,\varsigma}(G)\times U\rightarrow \con(G;U),\quad(\omega, u)\mapsto X(c_{\omega u}),
\]
is continuous for all $X\in \RU(\g_\BC)$. This is clearly true by
observing that
\[
  X(c_{\omega u})=c_{(L_X(\omega))u}, \quad \omega\in
  \RD^{\,\varsigma}(G), \ u\in U.
\]
\end{proof}

For any $\omega\in  \RD^{\,\varsigma}(G)$, denote by $\omega^\vee$
its push forward via the map
\[
      G \rightarrow  G,\quad g\mapsto g^{-1}.
\]
Applying Lemma \ref{smoothing1} to $V$, we get a continuous bilinear
map
\[
     \Phi _V: \ \ \RD^{\,\varsigma}(G)\times V \rightarrow V^\infty,\quad (\omega, v)\mapsto \omega v.
\]
Now for any $\omega\in  \RD^{\,\varsigma}(G)$, we define the
continuous linear map
\[
   U^{-\infty}\rightarrow U,\quad u\mapsto \omega u
\]
to be the transpose of
\[
    V \rightarrow V^\infty,\quad
   v\mapsto \omega^\vee\, v,
\]
i.e.,
\begin{equation}\label{omegauv}
\la \omega u,v\ra = \la u, \omega ^\vee v\ra , \quad u\in
U^{-\infty}, \ v\in V.
\end{equation}

\begin{lem}
The bilinear map
\begin{equation}\label{bm}
   \begin{array}{rcl}
   \Phi _V^{\vee}: \ \ \RD^{\,\varsigma}(G)\times U^{-\infty}& \rightarrow &U,\\
  (\omega, u)&\mapsto &\omega u
  \end{array}
\end{equation}
is separately continuous and extends (\ref{b1}).
\end{lem}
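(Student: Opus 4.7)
The plan has two parts: verify that the new definition extends \eqref{b1} on the subspace $U\subset U^{-\infty}$, and then establish separate continuity in each of the two variables.

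For the extension, I would fix $u\in U$ (viewed inside $U^{-\infty}$) and test $\omega u$ against an arbitrary $v\in V$. Writing $\omega=f\,dg$, the change of variable $g\mapsto g^{-1}$ implicit in the push--forward definition of $\omega^\vee$, combined with the $G$-invariance of $\la\,,\,\ra$, gives
\[
  \la u,\omega^\vee v\ra \;=\; \int_G \la u, g^{-1}v\ra\,f(g)\,dg\;=\; \int_G f(g)\la gu,v\ra\,dg,
\]
which is $\la\omega u,v\ra$ in the sense of \eqref{omegau}. Since $V$ separates points of $U$, the two elements of $U$ denoted $\omega u$ must coincide.

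Separate continuity in $u$ for fixed $\omega$ is essentially built into the construction: by definition, $u\mapsto \omega u$ is the transpose of the map $v\mapsto \omega^\vee v:V\to V^\infty$, which is continuous by Lemma~\ref{smoothing1}. Since both $U=V'$ and $U^{-\infty}=(V^\infty)'$ carry the strong dual topology, the transpose is strongly continuous.

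The main obstacle is separate continuity in $\omega$ for fixed $u\in U^{-\infty}$, because the transpose definition does not directly reveal how $\|\omega u\|_U$ depends on $\omega$. I would exploit the fact that the Hilbert topology on $U$ coincides with the strong dual topology of $V$, so that
\[
  \|\omega u\|_U \;=\; \sup_{\|v\|_V\leq 1}\,\abs{\la\omega u,v\ra}\;=\;\sup_{\|v\|_V\leq 1}\,\abs{u(\omega^\vee v)}.
\]
Choose a continuous seminorm $\abs{\,\cdot}_\mu$ on $V^\infty$ with $\abs{u(w)}\leq \abs{w}_\mu$ for all $w\in V^\infty$. The joint continuity of $\Phi_V$ (Lemma~\ref{smoothing1} applied to $V$), combined with the fact that every continuous seminorm on the Hilbert space $V$ is dominated by a multiple of $\|\cdot\|_V$, produces a continuous seminorm $\abs{\,\cdot}_\nu$ on $\RD^{\,\varsigma}(G)$ and a constant $C>0$ with $\abs{\omega' v}_\mu \leq C\,\abs{\omega'}_\nu\,\|v\|_V$ for all $\omega'\in\RD^{\,\varsigma}(G)$ and $v\in V$. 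Applying this with $\omega'=\omega^\vee$, and using that $\omega\mapsto\omega^\vee$ is a topological automorphism of $\RD^{\,\varsigma}(G)$, one gets $\|\omega u\|_U\leq C'\,\abs{\omega}_{\nu'}$ for a continuous seminorm $\abs{\,\cdot}_{\nu'}$ on $\RD^{\,\varsigma}(G)$, yielding the desired continuity. The delicate step is precisely the extraction of a bound that is uniform over the unit ball of $V$; this is what forces us to rely on the Hilbert structure of $U$ and $V$ rather than on more abstract arguments.
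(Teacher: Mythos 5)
Your proposal follows the same route as the paper: verify the extension by computing $\la\omega u,v\ra$ against $v\in V$, note that continuity in $u$ is automatic from the transpose construction, and reduce continuity in $\omega$ to the joint continuity of $\Phi_V$ (Lemma~\ref{smoothing1}) together with the fact that $V$ is a Hilbert space, so that $U$ is its norm dual. The paper compresses the last step into the assertion that joint continuity of $\theta_u(\omega,v)=\la u,\omega^\vee v\ra$ implies continuity of $\omega\mapsto\theta_u(\omega,\cdot)\in V'=U$; your seminorm estimates simply make explicit why that assertion holds.
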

\begin{proof}
It is routine to check that (\ref{bm}) extends (\ref{b1}). We
already know that (\ref{bm}) is continuous in the second variable.

Fix $u\in U^{-\infty}$, then the continuity of the bilinear map
\[
    \theta_u: \RD^{\,\varsigma}(G)\times V \rightarrow \BC,\quad(\omega, v)\mapsto \la u, \omega^\vee v\ra.
\]
clearly implies the continuity of the map
\[
   \RD^{\,\varsigma}(G) \rightarrow U,\quad \omega  \mapsto \omega u=\theta_u(\omega, \,\cdot\,).
\]
\end{proof}

\begin{lem} The image of $\Phi _V^{\vee}$ is contained in $U^\infty$, and
the induced bilinear map
\begin{equation}\label{b2}
   \begin{array}{rcl}
   \Phi _V^{\vee}:\ \ \RD^{\,\varsigma}(G)\times U^{-\infty}& \rightarrow &U^\infty,\\
  (\omega, u)&\mapsto &\omega u
  \end{array}
\end{equation}
is separately continuous.
\end{lem}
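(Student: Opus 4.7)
The plan is to first establish the covariance formula
\[
  g\cdot(\omega u) \;=\; (L_g\omega)\,u \quad \text{in } U,\qquad g\in G,\ \omega\in \RD^{\,\varsigma}(G),\ u\in U^{-\infty},
\]
and then read off both assertions from smoothness of the representation $L$ on $\RD^{\,\varsigma}(G)$ together with the continuity statements already in hand. For the covariance formula I would pair both sides against an arbitrary $v\in V$: by the defining relation (\ref{omegauv}) the left side equals $\la u,\omega^\vee(g^{-1}v)\ra$ and the right side equals $\la u,(L_g\omega)^\vee v\ra$, while the identity $\omega^\vee(g^{-1}v)=(L_g\omega)^\vee v$ in $V^\infty$ is a routine change of variables in the absolutely convergent Riemann integral (\ref{omegau}) applied to $V$. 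Since $V$ separates points of $U^{-\infty}$, the formula follows.

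Given this formula, the map $g\mapsto g(\omega u)=(L_g\omega)u$ factors as the composite of the smooth orbit map $G\to \RD^{\,\varsigma}(G)$, $g\mapsto L_g\omega$ (recall from the proof of Lemma~\ref{smooth} that $L$ is a smooth representation on $\RD^{\,\varsigma}(G)$), with the continuous linear map $\omega'\mapsto \omega' u$ of the previous lemma. Hence $c_{\omega u}\in \con^\infty(G;U)$, i.e.\ $\omega u\in U^\infty$, which settles the first claim. Differentiating the covariance formula at $g=1$ with respect to $X\in \RU(\gC)$ yields the useful auxiliary identity $X(\omega u)=(L_X\omega)u$, and consequently $X c_{\omega u}=c_{(L_X\omega)u}$.

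For the separate continuity into $U^\infty$, the subspace topology is generated by seminorms of the form $u'\mapsto \sup_{g\in K}|g(Xu')|_U$ with $K\subset G$ compact and $X\in \RU(\gC)$. Continuity of the Hilbert representation $U$ furnishes a constant $C_K>0$ with $|gu'|_U\leq C_K|u'|_U$ for all $g\in K$ and $u'\in U$, so these seminorms applied to $\omega u$ are bounded by $C_K\,|(L_X\omega)u|_U$. For fixed $u\in U^{-\infty}$ this bound depends continuously on $\omega$, being the composition of the continuous operator $L_X$ on $\RD^{\,\varsigma}(G)$ with $\omega'\mapsto \omega' u$; for fixed $\omega$ it depends continuously on $u$, as the map $u\mapsto (L_X\omega)u$. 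Both separate continuities of (\ref{b2}) therefore reduce to the separate continuity of (\ref{bm}) into $U$ that has already been established.

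The one genuinely substantive point is the covariance formula for distributional $u$; everything that follows is essentially formal, combining smoothness of $L$ with the local boundedness of the $G$-action on the Hilbert space $U$.
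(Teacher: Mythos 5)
Your proof is correct and takes essentially the same route as the paper's: establish that the covariance identity $c_{\omega u}(g)=(L_g\omega)u$ persists for $u\in U^{-\infty}$, deduce smoothness of $c_{\omega u}$ from smoothness of $L$ composed with the continuous map $\omega'\mapsto\omega' u$, and obtain separate continuity from the differentiated identity $X(c_{\omega u})=c_{(L_X\omega)u}$. The paper compresses the covariance check to the phrase ``by chasing the definition of $\omega u$'' and phrases the continuity step as a reduction to $\con(G;U)$ rather than spelling out the seminorms $\sup_{g\in K}|g(Xu')|_U$ and the constant $C_K$, but the underlying argument is identical to yours.
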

\begin{proof}
By chasing the definition of $\omega u$, we see that the equality
(\ref{comega}) still holds for all $\omega\in \RD^{\,\varsigma}(G)$
and $u\in U^{-\infty}$. Again, this implies that $\omega u\in
U^\infty$.

The proof for the separate continuity of $\Phi _V^{\vee}$ is similar to that of Lemma
\ref{smoothing1}. We need to prove that the map
\[
   \RD^{\,\varsigma}(G)\times U^{-\infty}\rightarrow \con^\infty(G;U),\quad
  (\omega, u) \mapsto c_{\omega u}.
\]
is separately continuous. This is the same as that the bilinear map
\[
     \RD^{\,\varsigma}(G)\times U^{-\infty}\rightarrow \con(G;U),\quad
  (\omega, u)\mapsto X(c_{\omega u}),
\]
is separately continuous for all $X\in \RU(\g_\BC)$. This is again
true by checking that
\[
  X(c_{\omega u})=c_{(L_X(\omega))u}, \quad \omega\in
  \RD^{\,\varsigma}(G),\
  u\in U^{-\infty}.
\]

\end{proof}

To summarize, we get a separately continuous bilinear map
\[
  \RD^{\,\varsigma}(G)\times U^{-\infty} \rightarrow U^\infty,\quad (\omega, u)\mapsto \omega
  u,
\]
which extends the action map
\[
  \RD^{\,\varsigma}(G)\times U \rightarrow U^\infty,\quad (\omega, u)\mapsto \omega
  u.
\]
Similarly, we have a separately continuous bilinear map
\[
  \RD^{\,\varsigma}(G)\times V^{-\infty} \rightarrow V^\infty,\quad (\omega, v)\mapsto \omega
  v,
\]
which extends the action map
\[
  \RD^{\,\varsigma}(G)\times V \rightarrow V^\infty,\quad (\omega, v)\mapsto \omega
  v.
\]

Now define the (distributional) matrix coefficient map by
\begin{equation}\label{mf}
  \begin{array}{rcl}
   &&c: \, U^{-\infty}\times V^{-\infty}\rightarrow \con^{-\xi}(G),\\
   &&c_{u\otimes v}(\omega):=\la \omega u, v\ra=\la u, \omega ^\vee v\ra, \ \ \omega \in
   \RD^{\,\varsigma}(G).
  \end{array}
\end{equation}
The last equality is implied by \eqref{omegauv} and the afore-mentioned
separate continuity statements.

\begin{lem}
The matrix coefficient map $c$ defined in (\ref{mf}) is continuous.
\end{lem}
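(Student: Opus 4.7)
The goal is to establish joint continuity of the bilinear map $c$. Since $\con^{-\xi}(G)$ carries the topology of uniform convergence on bounded (equivalently compact) subsets of $\RD^{\,\varsigma}(G)$, continuity of $c$ amounts to showing that for every bounded $B\subset\RD^{\,\varsigma}(G)$ the quantity $\sup_{\omega\in B}|c_{u\otimes v}(\omega)|$ on $U^{-\infty}\times V^{-\infty}$ is dominated by a product of continuous seminorms on the factors. My plan is to proceed in three steps.

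First, I would verify separate continuity of $c$. Fix $v\in V^{-\infty}$. From the dual form $c_{u\otimes v}(\omega)=\la u,\omega^\vee v\ra$ and the separate continuity, established in the preceding lemma, of the action map $(\omega,v)\mapsto\omega v$ from $\RD^{\,\varsigma}(G)\times V^{-\infty}$ into $V^\infty$, the map $\omega\mapsto\omega^\vee v$ is continuous from $\RD^{\,\varsigma}(G)$ into $V^\infty$. Hence, for any bounded $B\subset\RD^{\,\varsigma}(G)$, the image $A_v:=\{\omega^\vee v:\omega\in B\}$ is a bounded subset of $V^\infty$. Its polar $A_v^{\circ}\subset U^{-\infty}$ is a $0$-neighborhood on which $\sup_{\omega\in B}|c_{u\otimes v}(\omega)|\le 1$, giving continuity of $u\mapsto c_{u\otimes v}$; the argument in the other variable is symmetric.

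Second, I would show that $c$ maps products of bounded sets to bounded sets. Both $U^{-\infty}$ and $V^{-\infty}$ are barrelled as strong duals of the reflexive nuclear Fr\'{e}chet spaces $V^\infty$ and $U^\infty$. Applying Banach--Steinhaus to the separately continuous action $(\omega,v)\mapsto\omega v$, the family $\{v\mapsto\omega^\vee v\}_{\omega\in B}$ of continuous linear operators $V^{-\infty}\to V^\infty$ is equicontinuous; in particular, for any bounded $E_V\subset V^{-\infty}$, the set $\{\omega^\vee v:\omega\in B,\,v\in E_V\}$ is bounded in $V^\infty$. Since any bounded subset $E_U$ of the strong dual $U^{-\infty}$ is, by definition, uniformly bounded on bounded subsets of $V^\infty$, we obtain $\sup_{u\in E_U,\,v\in E_V,\,\omega\in B}|\la u,\omega^\vee v\ra|<\infty$, so $c(E_U\times E_V)$ is bounded in $\con^{-\xi}(G)$.

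The third step, which is where the main obstacle lies, is to upgrade separate continuity together with boundedness on products of bounded sets to joint continuity. Here I would invoke that $U^{-\infty}$ and $V^{-\infty}$ are complete nuclear (DF)-spaces, admitting countable fundamental systems of bounded sets arising from the Hilbert inverse-limit presentations of the nuclear Fr\'{e}chet spaces $V^\infty$ and $U^\infty$. By Grothendieck's theory of (DF)-spaces, adapting the classical fact that separately continuous bilinear forms on a product of two (DF)-spaces are jointly continuous, a separately continuous bilinear map between (DF)-spaces that is bounded on products of bounded sets is jointly continuous. Applying this principle to $c$ delivers the desired joint continuity into $\con^{-\xi}(G)$.
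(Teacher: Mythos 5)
Your proof is correct and follows essentially the same route as the paper: establish separate continuity by realizing each partial map as the transpose of a continuous linear map $\RD^{\,\varsigma}(G)\to U^\infty$ (resp.\ $V^\infty$), then upgrade to joint continuity using the fact that the spaces are barrelled $(DF)$-spaces arising as strong duals of (reflexive, nuclear) Fr\'echet spaces. The only difference is that your second step is superfluous: Grothendieck's theorem already asserts that a separately continuous bilinear map from a product of two $(DF)$-spaces into an \emph{arbitrary} locally convex space is jointly continuous (see, e.g., K\"othe, Topological Vector Spaces II, \S40.2, or the theorem the paper cites from Treves), so no extra boundedness hypothesis on products of bounded sets is needed, and the scalar-valued case requires no adaptation. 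The paper simply cites this theorem directly and concludes immediately from separate continuity; your insertion of the Banach--Steinhaus/equicontinuity step, while correct, does not buy anything.
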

\begin{proof}
Note that the spaces $U^{-\infty}$, $V^{-\infty}$ and
$\con^{-\xi}(G)$ are all strong duals of reflexive Fr\'{e}chet
spaces. Therefore by \cite[Theorem 41.1]{Tr}, it suffices to show
that the map (\ref{mf}) is separately continuous.  First fix $u\in
U^{-\infty}$, then the map
\[
     V^{-\infty}\rightarrow \con^{-\xi}(G),\quad  v\mapsto c_{u\otimes v},
 \]
is continuous since it is the transpose of the continuous linear map
\[
     \RD^{\,\varsigma}(G)\rightarrow  U^\infty,\quad
     \omega \mapsto  \omega u.
  \]
Similarly, fix $v\in V^{-\infty}$, the map
\[
      U^{-\infty}\rightarrow \con^{-\xi}(G),\quad
     u \mapsto c_{u\otimes v},
 \]
is continuous since it is the transpose of the continuous linear
map
\[
     \RD^{\,\varsigma}(G)\rightarrow  V^\infty,\quad
     \omega \mapsto  \omega^\ve v.
  \]

\end{proof}

It is straightforward to check that (\ref{mf}) extends the usual
matrix coefficient map (\ref{dmc}). The proof of the first assertion of Theorem \ref{thm1}
is now complete.

\vsp

To prove the second assertion of Theorem \ref{thm1} (the generalized matrix coefficient map \eqref{mapc} is a topological homomorphism with closed image), we need two elementary lemmas.

\begin{lem}\label{top1}
Let $\lambda:E\rightarrow F$ be a $G$ intertwining continuous linear map of representations of $G$. Assume that $E$ is Fr\'{e}chet, smooth, and of moderate growth, and $F$ is a Casselman-Wallach representation.  Then $E/\Ker(\lambda)$ is a Casselman-Wallach representation, and $\lambda$ is a topological homomorphism.
\end{lem}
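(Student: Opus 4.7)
The plan is to show that $E/\Ker(\lambda)$ is itself a Casselman-Wallach representation, so that the induced injective intertwining map $\bar\lambda: E/\Ker(\lambda)\to F$ is a morphism in $\CFH$; then the statement follows from the general Wallach result, recalled in Section 1, that every morphism in $\CFH$ is automatically a topological homomorphism with closed image.

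First, since $\lambda$ is continuous, $\Ker(\lambda)$ is a closed $G$-stable subspace of $E$, so $E/\Ker(\lambda)$ is a Fr\'{e}chet representation of $G$. Smoothness descends to the quotient: for each $v\in E$, the orbit map $g\mapsto g(v+\Ker(\lambda))$ is the composition of the smooth orbit map $g\mapsto gv$ with the continuous linear quotient $q: E\to E/\Ker(\lambda)$, hence is smooth. Moderate growth also descends: given a continuous seminorm $\abs{\,\cdot}_\mu$ on $E/\Ker(\lambda)$, pull it back to the continuous seminorm $\abs{\,\cdot}_\mu\circ q$ on $E$, apply moderate growth of $E$, and then note that this pulled-back seminorm is constant on $\Ker(\lambda)$-cosets (as $\Ker(\lambda)$ is $G$-stable), so the resulting estimate descends to a statement on $E/\Ker(\lambda)$.

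Next, I establish the Harish-Chandra conditions for $E/\Ker(\lambda)$. The key point is that $\bar\lambda: E/\Ker(\lambda)\hookrightarrow F$ is an injective continuous $G$-intertwining map into the Casselman-Wallach representation $F$. Restricting $\bar\lambda$ to $K$-isotypic subspaces gives, for each irreducible $K$-representation $\tau$, an injection of the $\tau$-isotypic component of $E/\Ker(\lambda)$ into that of $F$; admissibility of $F$ forces the latter to be finite-dimensional, so admissibility of $E/\Ker(\lambda)$ follows. For $\RZ(\g_\C)$-finiteness, any finite-codimensional ideal $I\subset \RZ(\g_\C)$ annihilating $F$ also annihilates $\bar\lambda(E/\Ker(\lambda))$ by $G$-equivariance, and hence annihilates $E/\Ker(\lambda)$ by injectivity of $\bar\lambda$. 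Therefore $E/\Ker(\lambda)\in\CFH$ and $\bar\lambda$ is a $\CFH$-morphism.

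Finally, by the Casselman-Wallach result quoted in Section 1, $\bar\lambda$ is a topological homomorphism with closed image; being in addition injective, it is a topological isomorphism onto its closed image in $F$. Since $q: E\to E/\Ker(\lambda)$ is a topological homomorphism by definition of the quotient topology, and $\lambda=\bar\lambda\circ q$ with $\Ker(\lambda)=\Ker(q)$, the induced map $E/\Ker(\lambda)\to \Im(\lambda)$ coincides with $\bar\lambda$ and is therefore a topological isomorphism. Hence $\lambda$ is itself a topological homomorphism. The only real subtlety, though conceptually light, is the transfer of admissibility and $\RZ(\g_\C)$-finiteness from $F$ to $E/\Ker(\lambda)$ through the injection $\bar\lambda$; once this is noted, the rest is a formal application of the Wallach automatic continuity theorem.
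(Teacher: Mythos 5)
The paper itself omits this proof (``As the proof is routine, we omit the details''), so there is no argument of record to compare against. Your proof is correct and is precisely the routine argument the authors have in mind: identify the quotient $E/\Ker(\lambda)$ as a Casselman--Wallach representation, observe that the induced injection $\bar\lambda$ into $F$ is then a $\CFH$-morphism, and invoke Wallach's automatic-continuity theorem to conclude $\bar\lambda$ (hence $\lambda$) is a topological homomorphism. The transfer of admissibility and $\RZ(\gC)$-finiteness along the injection $\bar\lambda$ is exactly the right observation, and the reduction of the second assertion to the first via $\lambda = \bar\lambda\circ q$ is clean.

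Two small points where the write-up is slightly too quick, both fixable in one line. In the moderate-growth step, after obtaining $|g\bar v|_\mu = |q(gv)|_\mu \le \phi(g)\,|v|_\nu$ for every lift $v$ of $\bar v$, one should take the infimum over lifts to replace $|v|_\nu$ by the quotient seminorm $|\bar v|_{\bar\nu}$ on $E/\Ker(\lambda)$; the observation that the left-hand side is independent of the lift (because $\Ker(\lambda)$ is $G$-stable) is necessary but not by itself sufficient, since $\nu$ on the right-hand side is a seminorm on $E$, not on the quotient. In the smoothness step, showing each orbit map $g\mapsto g\bar v$ is smooth gives $(E/\Ker(\lambda))_\infty = E/\Ker(\lambda)$ as sets, but the definition of a smooth representation used in the paper requires the joint action map $G\times E/\Ker(\lambda)\to E/\Ker(\lambda)$ to be smooth, equivalently that the subspace topology from $\con^\infty(G;E/\Ker(\lambda))$ agrees with the quotient topology; since both are Fr\'echet this follows from the closed graph theorem, but it is worth saying.
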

\begin{proof}
The quotient representation $E/\Ker(\lambda)$ is clearly  Fr\'{e}chet, smooth, and of moderate growth. It is $\mathrm{Z}(\g_\BC)$ finite and $K$ finite since it is  mapped injectively to $F$.  Therefore it is a Casselman-Wallach representation. The second assertion is a consequence of Casselman-Wallach globalization Theorem.

\end{proof}

\begin{lem}\label{top2}
Let $\lambda:E\rightarrow F$ be a continuous linear map of nuclear Fr\'{e}chet spaces. Equip the dual spaces $E'$ and $F'$ with the strong dual topologies. Then $\lambda$ is a topological homomorphism if and only if its transpose $\lambda^t: F'\rightarrow E'$ is. When this is the case, both $\lambda$ and $\lambda^t$ have closed images.
\end{lem}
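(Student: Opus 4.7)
The plan is to reduce the condition ``topological homomorphism'' to ``closed image'' via Banach's open mapping theorem, and then to use standard Fr\'{e}chet-space duality to transfer closedness of the image between $\lambda$ and $\lambda^t$.

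First I would establish the preparatory fact: for a continuous linear map $\mu: X\rightarrow Y$ between Fr\'{e}chet spaces, $\mu$ is a topological homomorphism if and only if $\Im(\mu)$ is closed in $Y$. If $\mu$ is a topological homomorphism, then $X/\Ker(\mu)\cong \Im(\mu)$ topologically, and since the quotient is Fr\'{e}chet and hence complete, $\Im(\mu)$ inherits a complete topology and is therefore closed in $Y$. Conversely, closedness makes $\Im(\mu)$ Fr\'{e}chet in the subspace topology, and the continuous linear bijection $X/\Ker(\mu)\rightarrow \Im(\mu)$ is open by Banach's theorem.

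Suppose now $\lambda$ is a topological homomorphism, so $\Im(\lambda)$ is closed in $F$. Factor $\lambda$ as $E\twoheadrightarrow E/\Ker(\lambda)\overset{\sim}{\rightarrow}\Im(\lambda)\hookrightarrow F$ with the middle arrow a topological isomorphism, and take strong duals. Using the standard duality relations for a closed subspace $M$ of a Fr\'{e}chet space $Z$---namely $(Z/M)'\cong M^{\perp}$ as a strongly closed subspace of $Z'$, and the restriction map $Z'\twoheadrightarrow M'$ is a topological quotient with kernel $M^{\perp}$ (the surjectivity being Hahn-Banach)---one exhibits $\lambda^t$ as the composition $F'\twoheadrightarrow F'/\Im(\lambda)^{\perp}\overset{\sim}{\rightarrow}\Ker(\lambda)^{\perp}\hookrightarrow E'$. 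This composition is manifestly a topological homomorphism with closed image $\Ker(\lambda)^{\perp}=\Im(\lambda^t)$.

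For the converse I would invoke the reflexivity of nuclear Fr\'{e}chet spaces: the canonical maps $E\rightarrow E''$ and $F\rightarrow F''$ are topological isomorphisms, and $\lambda^{tt}=\lambda$ under these identifications. The strong duals $E',F'$ are themselves nuclear reflexive spaces (in fact Silva spaces, being countable inductive limits of Banach spaces with compact linking maps), and the same duality identifications and open-mapping-type arguments used above apply in this setting. This is the step demanding the most care---it is the main obstacle of the proof---but is well-established in the theory of nuclear DF-spaces. Applying the previous paragraph with $\lambda$ replaced by $\lambda^t$ then shows that if $\lambda^t$ is a topological homomorphism, so is $\lambda^{tt}=\lambda$. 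The closed-image assertion follows immediately from the symmetric identifications $\Im(\lambda^t)=\Ker(\lambda)^{\perp}$ and $\Im(\lambda)=\Ker(\lambda^t)^{\perp}$, both of which are evidently closed in the respective strong topologies.
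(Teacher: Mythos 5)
Your forward direction (from $\lambda$ a topological homomorphism to $\lambda^t$ one) is essentially correct, and the closing identifications $\Im(\lambda^t)=\Ker(\lambda)^{\perp}$ and $\Im(\lambda)=\Ker(\lambda^t)^{\perp}$ are sound. One imprecision worth flagging: the duality relations you invoke for a closed subspace $M$ of a Fr\'{e}chet space $Z$ --- that $(Z/M)'$ sits inside $Z'$ topologically as $M^{\perp}$, and that restriction $Z'\to M'$ is a strong topological quotient --- do not hold for arbitrary Fr\'{e}chet spaces; they depend on reflexivity and distinguishedness, which here hold because the spaces are nuclear, hence Montel. You should say so, since that is exactly where the nuclearity hypothesis enters.

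The converse is the genuine gap, and you name it without closing it. Your preparatory reduction (topological homomorphism iff closed image) rests on Banach's open mapping theorem, stated and proved for Fr\'{e}chet spaces. But $\lambda^t\colon F'\to E'$ is a map between strong duals of nuclear Fr\'{e}chet spaces, i.e.\ (DFN)-spaces, which are not Fr\'{e}chet; neither the open mapping theorem nor the closed-subspace/quotient duality identities transfer automatically to that category. Saying that ``the same open-mapping-type arguments apply'' is precisely the claim that needs a proof or a precise citation: one would need an open mapping theorem for (DFN)-spaces (for instance via De Wilde's webbed-space theorem, since these spaces are ultrabornological and webbed) together with the corresponding duality of closed subspaces and Hausdorff quotients in the (DFN) setting. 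The paper avoids the entire issue by citing one duality theorem of Bourbaki (\emph{Topological Vector Spaces}, Section IV.2, Theorem 1), which gives the two-sided equivalence directly, nuclearity entering only through the observation that bounded subsets of complete nuclear spaces are relatively compact. If you wish to keep your hands-on route, you must supply the (DFN)-space inputs with explicit references; otherwise the Bourbaki citation is the shorter and safer path, and it is what the paper does.
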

\begin{proof}
The first assertion is a special case of \cite[Section IV.2, Theorem 1]{Bo}. (Recall that every bounded set in a complete nuclear space is relatively compact.)

Now assume that $\lambda$ is a topological homomorphism. Then as a Hausdorff quotient of a Fr\'{e}chet space, $E/\Ker(\lambda)$ is complete, and so is $\Im(\lambda)$, which implies that $\Im(\lambda)$ is closed in $F$.
By the Extension Theorem of continuous linear functionals, the image of $\lambda^t$ consists of all elements in $E'$ which vanish on $\Ker(\lambda)$. This is closed in $E'$.
\end{proof}

Recall that both $V^\infty\widehat \otimes U^\infty$ and $\RD^{\,\varsigma}(G)$ are nuclear Fr\'{e}chet spaces. In particular, they are both reflexive. The map $c$ of \eqref{mapc} is the transpose of a $G\times G$ intertwining continuous linear map
\[
  c^t: \RD^{\,\varsigma}(G)\rightarrow V^\infty\widehat{\otimes} U^\infty.
\]
(Here we have used the canonical isomorphism $E'\widehat{\otimes}F'\simeq (E\widehat{\otimes}F)'$, for nuclear Fr\'{e}chet spaces $E$ and $F$. See
\cite[Proposition 50.7]{Tr}.)
Lemma \ref{top1} for the group $G\times G$ implies that $c^t$ is a topological homomorphism. Lemma \ref{top2} then implies that $c$ is a topological homomorphism with closed image. This completes the proof of the second assertion of Theorem \ref{thm1}.

\section{Proof of Theorem \ref{gelfand}}

The argument is standard (cf. \cite{GK} or \cite{Sh}). We use the notation and
the assumption of Theorem \ref{gelfand}. As before, $U^{-\infty}$ is
the strong dual of $V^\infty$, and $V^{-\infty}$ is the strong dual
of $U^\infty$. Suppose that both
$\Hom_{S_1}(U^\infty,\C_{\chi_{1}})$ and
$\Hom_{S_2}(V^\infty,\C_{\chi_{2}})$ are non-zero. Pick
\[
  0\ne u_0\in \Hom_{S_2}(V^\infty,\C_{\chi_{2}})\subset U^{-\infty}
\]
and
\[
  0\ne v_0\in \Hom_{S_1}(U^\infty,\C_{\chi_{1}})\subset V^{-\infty}.
\]
Then the matrix coefficient $c_{u_0\otimes v_0}\in \con^{-\xi}(G)$
satisfies the followings:
\[
  \left\{
    \begin{array}{l}
       \textrm{it is an eigenvector of
       $\operatorname{U}(\g_\BC)^G$,}\, \, \,
       (\text{by the irreducibility hypothesis})\\
         \textrm{} c_{u_0\otimes v_0}(sx)=\chi_{1}(s)\,c_{u_0\otimes v_0}(x),
         \quad s\in S_1, \textrm{ and }\smallskip\\
              \textrm{}   c_{u_0\otimes v_0}(xs)=\chi_{2}(s)^{-1}
              \,c_{u_0\otimes v_0}(x), \quad s\in S_2.
  \end{array}
  \right.
\]
By the assumption of the theorem, we have
\begin{equation}\label{inv1}
   c_{u_0\otimes v_0}(x^\sigma)= c_{u_0\otimes v_0}(x).
\end{equation}

\begin{lem}\label{iff}
Let $\omega\in \RD^{\,\varsigma}(G)$. Denote by
\[
  \sigma_* : \RD^{\,\varsigma}(G)\rightarrow \RD^{\,\varsigma}(G)
\]
the push forward map by $\sigma$. Then
\[
   \omega u_0=0\quad\textrm{if and only if}\quad (\sigma_*(\omega))^\vee v_0=0.
\]
\end{lem}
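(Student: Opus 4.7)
The plan is to reduce both vanishing conditions $\omega u_0=0$ and $(\sigma_*\omega)^\vee v_0=0$ to the vanishing of the matrix coefficient $c_{u_0\otimes v_0}$ on a common family of test densities, and then to conclude via the $\sigma$-invariance \eqref{inv1}. The first step uses irreducibility: since $U^\infty$ and $V^\infty$ are irreducible Casselman-Wallach representations and $u_0,v_0$ are both nonzero, the $G$-stable subspaces $\{\omega'' u_0:\omega''\in\RD^{\,\varsigma}(G)\}\subset U^\infty$ and $\{\omega' v_0:\omega'\in\RD^{\,\varsigma}(G)\}\subset V^\infty$ are nonzero (by an approximate-identity argument), hence dense. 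The nondegenerate pairing $U^\infty\times V^\infty\to\BC$ then gives
\[
  \omega u_0=0 \quad\Leftrightarrow\quad \la\omega u_0,\omega' v_0\ra=0 \text{ for all }\omega'\in\RD^{\,\varsigma}(G),
\]
\[
  (\sigma_*\omega)^\vee v_0=0 \quad\Leftrightarrow\quad \la\omega'' u_0,(\sigma_*\omega)^\vee v_0\ra=0 \text{ for all }\omega''\in\RD^{\,\varsigma}(G).
\]

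Next, \eqref{omegauv} (applied with $\omega$ replaced by $\omega^\vee$) yields the rewriting $\la u,\eta v\ra=\la\eta^\vee u,v\ra$, which together with the convolution identity $(\mu*\nu)u=\mu(\nu u)$ gives
\[
  \la\omega u_0,\omega' v_0\ra=c_{u_0\otimes v_0}\bigl((\omega')^\vee*\omega\bigr),\qquad \la\omega'' u_0,(\sigma_*\omega)^\vee v_0\ra=c_{u_0\otimes v_0}\bigl(\sigma_*\omega*\omega''\bigr).
\]
The lemma is thereby reduced to showing that $c_{u_0\otimes v_0}$ kills $(\omega')^\vee*\omega$ for every $\omega'\in\RD^{\,\varsigma}(G)$ if and only if it kills $\sigma_*\omega*\omega''$ for every $\omega''\in\RD^{\,\varsigma}(G)$.

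Finally, pairing the $\sigma$-invariance \eqref{inv1} with Schwartz densities gives $c_{u_0\otimes v_0}\circ\sigma_*=c_{u_0\otimes v_0}$ and hence also $c_{u_0\otimes v_0}\circ\sigma_*^{-1}=c_{u_0\otimes v_0}$. Since $\sigma$ is a continuous anti-automorphism of $G$, its pushforward reverses convolution, so $\sigma_*^{-1}(\sigma_*\omega*\omega'')=(\sigma_*^{-1}\omega'')*\omega$; hence
\[
  c_{u_0\otimes v_0}(\sigma_*\omega*\omega'')=c_{u_0\otimes v_0}\bigl((\sigma_*^{-1}\omega'')*\omega\bigr).
\]
Since both $\omega'\mapsto(\omega')^\vee$ and $\omega''\mapsto\sigma_*^{-1}\omega''$ are bijections of $\RD^{\,\varsigma}(G)$, the two families $\{(\omega')^\vee*\omega\}_{\omega'}$ and $\{\sigma_*^{-1}\omega''*\omega\}_{\omega''}$ coincide, completing the proof. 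The main technical nuisance is the antimultiplicativity $\sigma_*(\mu*\nu)=\sigma_*\nu*\sigma_*\mu$ together with fixing conventions on $\vee$, left translation, and Haar measure, but these are routine once the convolution action $(\mu*\nu)u_0=\mu(\nu u_0)$ implicit in the proof of Theorem \ref{thm1} is in hand.
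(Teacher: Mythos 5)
Your proof is correct, and it takes a genuinely different route from the paper's. The paper tests vanishing of the matrix coefficient against the one-parameter family of left translates $\{L_g\omega : g\in G\}$: irreducibility of $U^\infty$ gives $\omega u_0=0\Leftrightarrow \la (L_g\omega)u_0,v_0\ra=0$ for all $g$, the $\sigma$-invariance \eqref{inv1} converts this to $\la \sigma_*(L_g\omega)u_0,v_0\ra=0$ for all $g$, and the key computation is the elementary identity $\la(\sigma_*(L_g\omega))u_0,v_0\ra=\la g^\sigma u_0,(\sigma_*\omega)^\vee v_0\ra$ (because $\sigma$ converts left translation by $g$ into right translation by $g^\sigma$), after which irreducibility of $V^\infty$ finishes. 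You instead test against the full algebra family $\{(\omega')^\vee*\omega:\omega'\in\RD^{\,\varsigma}(G)\}$, reducing the lemma to the anti-multiplicativity $\sigma_*(\mu*\nu)=\sigma_*\nu*\sigma_*\mu$ plus the $\sigma$-invariance of $c_{u_0\otimes v_0}$, with irreducibility entering through density of $\{\omega'v_0\}$ and $\{\omega''u_0\}$. The trade-off: your argument sidesteps the paper's explicit push-forward bookkeeping for $\sigma_*(L_g\omega)$, replacing it by the cleaner algebraic fact that $\sigma_*$ reverses convolution, at the cost of an approximate-identity argument and the nondegeneracy of the pairing on $U^\infty\times V^\infty$ to get density. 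Both arguments have the same skeleton (reduce to vanishing of $c_{u_0\otimes v_0}$ on a test family, invoke $\sigma$-invariance, unwind), so neither is dramatically shorter, but yours is a perfectly valid alternative.
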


\begin{proof}
As a consequence of (\ref{inv1}), we have
\begin{equation}\label{kernel}
    c_{u_0\otimes v_0}(\omega)=0\quad\textrm{if and only if}
    \quad c_{u_0\otimes v_0}(\sigma_*(\omega))=0.
\end{equation}

By the irreducibility of $U^\infty$, $\omega u_0=0$ if and only if
\[
   \la g(\omega u_0), v_0\ra=0\quad\textrm{for all } g\in G,
\]
i.e.,
\[
   \la (L_g\omega) u_0, v_0\ra=0\quad\textrm{for all } g\in G.
\]
By (\ref{kernel}), this is equivalent to saying that
\begin{equation*}\label{kernel2}
  \la (\sigma_*(L_g\omega) )u_0, v_0\ra=0\quad\textrm{for all } g\in G.
\end{equation*}
Now the lemma follows from the following elementary identity and the
irreducibility of $V^\infty$:
\begin{equation*}
  \phantom{=}\,\la (\sigma_*(L_g\omega) )u_0, v_0\ra
  =\la (\sigma_*\omega) (g^\sigma u_0), v_0\ra
  =\la  (g^\sigma u_0), (\sigma_*\omega)^\vee\, v_0\ra.
\end{equation*}
\end{proof}

\begin{proof}[End of proof of Theorem \ref{gelfand}]
Let
\[
0\ne u'_0\in \Hom_{S_2}(V^\infty,\C_{\chi_{2}})\subset U^{-\infty}
\]
be another element. Applying Lemma \ref{iff} twice, we get that for
all $\omega\in \RD^{\,\varsigma}(G)$,
\[
    \omega u_0=0\quad\textrm{if and only if}\quad \omega u_0'=0.
\]
Therefore the two continuous $G$ homomorphisms
\[
    \Phi: \omega \mapsto  \omega u_0,\quad \textrm{and}\quad
     \Phi': \omega \mapsto  \omega u'_0,
\]
from $\RD^{\,\varsigma}(G)$ to $U^\infty$ have the same kernel, say
$J$. Here and as before, we view $\RD^{\,\varsigma}(G)$ as a
representation of $G$ via left translations. Both $\Phi$ and $\Phi'$
induce nonzero $G$ homomorphisms into the irreducible Casselman-Wallach representation $U^{\infty}$
\[
  \bar{\Phi}, \bar{\Phi'}: \RD^{\,\varsigma}(G)/J\rightarrow  U^\infty,
\]
without kernel. Lemma \ref{top1} says that $\RD^{\,\varsigma}(G)/J$ is a Casselman-Wallach representation of $G$. By Schur's lemma for Casselman-Wallach representations, $\bar{\Phi'}$ is a scalar multiple of $\bar{\Phi}$, which implies that $u_0'$ is a scalar multiple of $u_0$. This proves that
\[
   \dim \Hom_{S_2}(V^\infty, \C_{\chi_{2}})=1.
\]
Similarly,
\[
   \dim \Hom_{S_1}(U^{\infty}, \C_{\chi_{1}})=1.
\]
This finishes the proof of Part (a) of Theorem \ref{gelfand}. Part (b) of Theorem \ref{gelfand} is immediate as the matrix coefficient $c_{u_0\otimes v_0}$ would have to be zero if there were
nonzero $u_0\in \Hom_{S_2}(V^\infty,\C_{\chi_{2}})$ and nonzero
$v_0\in \Hom_{S_1}(U^\infty,\C_{\chi_{1}})$.
\end{proof}

\vsp

\section{Proof of Corollary \ref{sgel}}

Let $G'$ be a reductive closed subgroup of the real reductive group $G$, and let $\sigma$ be a continuous anti-automorphism of $G$ such that $\sigma(G')=G'$, as in Corollary \ref{sgel}. Assume that for every $f\in \con^{-\xi}(G)$, the
condition
\[
   f(gxg^{-1})=f(x),\quad g\in G'
\]
implies that
\[
  f(x^\sigma)=f(x).
\]

Set
\[
  H:=G\times G',
\]
which contains $G$ as a subgroup. Denote by $S\subset H$ the group
$G'$ diagonally embedded in $H$. For any $x=(g,g')\in H$, set
\[
  x^\sigma:=(g^\sigma,{g'}^\sigma).
\]

\begin{lem}\label{invh}
If $f\in \con^{-\xi}(H)$ is invariant under the adjoint action, then it is $\sigma$-invariant.
\end{lem}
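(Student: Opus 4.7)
The strategy is to reduce Lemma~\ref{invh} to the hypothesis of Corollary~\ref{sgel} by descending the problem from $H=G\times G'$ to $G$ via the map $\Psi\colon H\to G$ defined by $\Psi(g_1,g_2):=g_1g_2^{-1}$.  This $\Psi$ is right-$S$-invariant, since $\Psi(g_1h,g_2h)=g_1hh^{-1}g_2^{-1}=\Psi(g_1,g_2)$ for $h\in G'$, and left-$S$-equivariant in the conjugation sense, $\Psi(hg_1,hg_2)=h\,\Psi(g_1,g_2)\,h^{-1}$.  Thus $\Psi$ realises $H$ as a principal right-$S$-bundle over $G$, and intertwines the diagonal $S$-adjoint action on $H$ with the $G'$-conjugation action on $G$.

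Given $f\in\con^{-\xi}(H)$ invariant under the adjoint action --- which in particular provides the right-$S$-invariance required for the descent --- I would define $F\in\con^{-\xi}(G)$ by setting $\langle F,\omega\rangle:=\langle f,\tilde\omega\rangle$ for any $\tilde\omega\in\RD^{\,\varsigma}(H)$ whose fibre integral along right-$S$-orbits equals $\omega$. The adjoint invariance of $f$ makes this independent of the lift $\tilde\omega$, and the left-$S$-invariance translates through the equivariance of $\Psi$ into $G'$-conjugation invariance of $F$ on $G$.  The hypothesis of Corollary~\ref{sgel} then yields $F(x^\sigma)=F(x)$ on $G$.

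To carry the $\sigma$-invariance back to $H$, I would compute, using that $\sigma$ is an anti-automorphism,
\[
  \Psi(g_1^\sigma,g_2^\sigma)=g_1^\sigma(g_2^\sigma)^{-1}=\sigma(g_1)\sigma(g_2^{-1})=\sigma(g_2^{-1}g_1),
\]
and compare with $\sigma(\Psi(g_1,g_2))=\sigma(g_1g_2^{-1})$.  Since $g_2^{-1}g_1=g_2^{-1}(g_1g_2^{-1})g_2$ with $g_2\in G'$, the two elements $g_2^{-1}g_1$ and $g_1g_2^{-1}$ are $G'$-conjugate, and, because $\sigma(G')=G'$, so are their $\sigma$-images.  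Combining the $G'$-conjugation invariance and the $\sigma$-invariance of $F$ yields
\[
  F\bigl(\Psi(g_1^\sigma,g_2^\sigma)\bigr)=F\bigl(\sigma(g_1g_2^{-1})\bigr)=F(g_1g_2^{-1})=F\bigl(\Psi(g_1,g_2)\bigr),
\]
so that $f(x^\sigma)=f(x)$.

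The main technical point, and the step I expect to require the most care, is the rigorous construction of the descent $F$: one must show that fibre integration along right-$S$-orbits defines a continuous surjection $\Psi_*\colon\RD^{\,\varsigma}(H)\to\RD^{\,\varsigma}(G)$ of nuclear Fr\'echet spaces, so that the prescription $\langle F,\omega\rangle:=\langle f,\tilde\omega\rangle$ makes $F$ well-defined and tempered.  This relies on the principal-bundle structure of $\Psi$ together with the invariance hypothesis on $f$, which together guarantee that $f$ factors through this quotient.
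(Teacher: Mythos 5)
Your proposal has a genuine gap at the very first step: the descent along $\Psi\colon H\to G$, $(g_1,g_2)\mapsto g_1g_2^{-1}$, does not exist under the hypothesis of Lemma~\ref{invh}. The fibres of $\Psi$ are the orbits of the \emph{right translation} action $(g_1,g_2)\cdot h=(g_1h,g_2h)$ of $S=G'$ on $H$; for $\langle F,\omega\rangle:=\langle f,\tilde\omega\rangle$ to be independent of the lift, $f$ must be constant along these fibres, i.e.\ right-$S$-invariant. But the hypothesis of the lemma is invariance under the adjoint action of $H=G\times G'$, namely $f(h_1x_1h_1^{-1},h_2x_2h_2^{-1})=f(x_1,x_2)$ for $(h_1,h_2)\in H$. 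That is a conjugation invariance, not a one-sided translation invariance, and it does not imply right-$S$-invariance. Indeed $\Psi$ is not even constant on $\Ad_H$-orbits: $\Psi(h_1x_1h_1^{-1},h_2x_2h_2^{-1})=h_1x_1h_1^{-1}h_2x_2^{-1}h_2^{-1}$, which is a conjugate of $\Psi(x_1,x_2)$ only when $h_1=h_2$. So $F$ is not well-defined, and the argument cannot start.

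What you have written is essentially a proof of Lemma~\ref{tauinv}, the companion lemma in this section: there $f$ is assumed bi-$S$-invariant, which supplies exactly the right-$S$-invariance needed for the $\Psi$-descent, while the left-$S$-invariance descends (via $\Psi(hx_1,hx_2)=h\,\Psi(x_1,x_2)\,h^{-1}$) to $G'$-conjugation invariance of $F$; your lifting computation then closes that argument, and in fact gives a somewhat tidier route than the paper's own proof of Lemma~\ref{tauinv} via $m_H\colon S\times G\times S\to H$. For Lemma~\ref{invh} itself the paper argues quite differently: the section hypothesis gives directly that $\Ad$-invariant elements of $\con^{-\xi}(G)$ are $\sigma$-invariant, unimodularity lets one transfer the same statement to $\con^{-\xi}(G')$, and the conclusion on $H=G\times G'$ follows because both $\sigma$ and the adjoint action act factorwise. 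If you want to salvage your strategy for Lemma~\ref{invh}, you would need a different quotient map, one whose fibres are contained in $\Ad_H$-orbits; $\Psi$ is not such a map.
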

\begin{proof}
The assumption at the beginning of this section trivially implies that every invariant tempered generalized function on $G$ is $\sigma$-invariant. Since both $G$ and $G'$ are unimodular, it also implies that every invariant tempered generalized function on $G'$ is $\sigma$-invariant. The lemma follows easily from these two facts.
\end{proof}

\begin{lem}\label{tauinv}
If $f\in \con^{-\xi}(H)$ is a bi
$S$-invariant, then it is $\sigma$-invariant.
\end{lem}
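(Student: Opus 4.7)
The strategy is to reduce bi-$S$-invariance on $H=G\times G'$ to $\Ad(G')$-invariance on $G$, apply the standing hypothesis of Corollary \ref{sgel}, and then transport the resulting $\sigma$-invariance back to $H$. The key geometric observation is that the smooth map
\[
  \mu:\ H\to G,\qquad (a,b)\mapsto ab^{-1},
\]
presents $H$ as a principal $G'$-bundle over $G$ via the free right action $(a,b)\cdot g':=(ag',bg')$. A direct computation gives
\[
  \mu\bigl((g_1,g_1)\cdot (a,b)\cdot (g_2,g_2)^{-1}\bigr)=g_1(ab^{-1})g_1^{-1},\qquad g_1,g_2\in G',
\]
so that the right $S$-action on $H$ is precisely the fiber action of $\mu$, while left $S$-translations descend to $\Ad(G')$-translations on $G$.

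First I would descend $f$ along $\mu$. Fiber integration against the Haar measure on $G'$ (unimodular since $G'$ is reductive) should yield a continuous surjection $\mu_*:\RD^{\,\varsigma}(H)\to\RD^{\,\varsigma}(G)$ whose kernel is the closed subspace spanned by right-$S$-translation differences. Since $f$ is right $S$-invariant, the rule $h(\mu_*\omega):=f(\omega)$ then defines a unique $h\in\con^{-\xi}(G)$, which encodes the formal identity $f=h\circ\mu$. The residual left-$S$-invariance of $f$ becomes the $\Ad(G')$-invariance of $h$, and the standing hypothesis of Corollary \ref{sgel} then gives $h(x^\sigma)=h(x)$.

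Finally I would transport this back, using that $\sigma$ preserves $G'$ (so $b^{-\sigma}\in G'$) together with $\Ad(G')$-invariance of $h$:
\[
  f(a^\sigma,b^\sigma)=h(a^\sigma b^{-\sigma})=h\bigl(b^{-\sigma}(a^\sigma b^{-\sigma})b^\sigma\bigr)=h(b^{-\sigma}a^\sigma)=h\bigl((ab^{-1})^\sigma\bigr)=h(ab^{-1})=f(a,b),
\]
which is the desired $\sigma$-invariance on $H$. The main technical obstacle is the descent step: verifying that fiber integration along $\mu$ is a continuous surjection of Schwartz densities $\RD^{\,\varsigma}(H)\to\RD^{\,\varsigma}(G)$ with the expected kernel. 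This requires genuine temperedness estimates and uses that $G'$ is closed and reductive in $G$, so that a norm on $G$ restricts to a comparable norm on $G'$; once this descent is established, the rest of the argument is formal manipulation of generalized functions.
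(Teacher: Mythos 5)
Your proof is correct in its essential idea, but it takes a genuinely different route from the paper. You push $f$ forward along the ``double coset'' map $\mu:H\to G$, $(a,b)\mapsto ab^{-1}$, quotienting out the right $S$-action by fiber integration, then appeal to the standing hypothesis on $G$, and finally transport the $\sigma$-invariance back. The paper instead pulls $f$ \emph{back} along the multiplication map $m_H:S\times G\times S\to H$, $(s_1,g,s_2)\mapsto s_1gs_2$, observes that bi-$S$-invariance forces the pullback to have the split form $1\otimes f_G\otimes 1$, applies the hypothesis to $f_G$, and transports back using the commutative square $m_H\circ\sigma=\sigma\circ m_H$ with $(s_1,g,s_2)^\sigma:=(s_2^\sigma,g^\sigma,s_1^\sigma)$. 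The pullback approach has two practical advantages over your pushforward approach: (i) the pullback of tempered generalized functions along a surjective submersion is injective, so there is no ``right kernel'' to identify, whereas your descent requires proving that $\ker(\mu_*)$ is exactly the closed span of right-$S$-translation differences (dually: that the only right-$G'$-invariant tempered generalized functions on $G'$ are constants) --- true, but an extra lemma you flag as a ``main technical obstacle''; and (ii) the anti-automorphism $\sigma$ commutes with $m_H$ on the nose, so transport back is a one-line diagram chase, whereas $\sigma$ does \emph{not} intertwine with your $\mu$ (one has $\mu(x^\sigma)=a^\sigma b^{-\sigma}\neq \mu(x)^\sigma=b^{-\sigma}a^\sigma$), which is why your final chain must insert a conjugation by the \emph{varying} element $b^\sigma\in G'$. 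That last step is where your writeup is thinnest: the pointwise identity $h(a^\sigma b^{-\sigma})=h(b^{-\sigma}a^\sigma)$ is not a literal substitution for generalized functions, but follows from the fact that $\Ad(G')$-invariance of $h$ forces the pullback of $h$ under the map $G\times G'\to G$, $(y,g')\mapsto g'yg'^{-1}$, to equal $h\otimes 1$; you should state this. With those two points filled in, your argument is a valid and conceptually appealing alternative, trading the paper's over-parametrization $S\times G\times S$ for an honest quotient realization.
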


\begin{proof}

The multiplication map
\[
  \begin{array}{rcl}
   m_H: S\times G\times S&\rightarrow &H\\
         (s_1, g, s_2)&\mapsto& s_1 g s_2
  \end{array}
\]
is a  surjective submersion. Let $f$ be a bi $S$-invariant
generalized function on $H$. Then its pull back has the form
\[
  m_H^*(f)=1\otimes f_G\otimes 1, \quad \textrm{with} \ f_G\in
  \con^{-\xi}(G).
\]

We use ``$\Ad$" to indicate the adjoint action. By considering the commutative diagram
\begin{equation*}
    \begin{CD}
        S\times G\times S@>>m_H>H\\
         @V\Ad_s\times \Ad_s\times \Ad_s VV           @V\Ad_sVV                \\
         S\times G\times S@>>m_H>H\,,
  \end{CD}
\end{equation*}
for all $s\in S$, we conclude that $f_G$ is invariant under the
adjoint action of $G'$. Therefore $f_G$ is $\sigma$-invariant by assumption.

Set
\[
  (s_1,g,s_2)^\sigma:=(s_2^\sigma, g^\sigma, s_1^\sigma),  \quad
  (s_1,g,s_2)\in S\times G\times S.
\]
Then $1\otimes f_G\otimes 1\in \con^{-\xi}( S\times G\times S)$
is also $\sigma$-invariant. We conclude that $f$ is
$\sigma$-invariant by appealing to the commutative diagram
\begin{equation*}
    \begin{CD}
        S\times G\times S@>>m_H>H\\
         @V\sigma VV           @V\sigma VV                \\
         S\times G\times S@>>m_H>H\,.
  \end{CD}
\end{equation*}

\end{proof}

Let $(V_H,
\rho)$ be an irreducible Casselman-Wallach representation of
$H$.
\begin{lem}\label{contrah}
Set
\[
  \rho_{-\sigma}(h):=\rho(h^{-\sigma}).
\]
Then $(V_H,\rho_{-\sigma})$ is an irreducible Casselman-Wallach representation of $H$ which is contragredient to $(V_H,\rho)$.
\end{lem}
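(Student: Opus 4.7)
The plan is to identify $(V_H, \rho_{-\sigma})$ with the smooth contragredient $\rho^\ve$ of $\rho$ by matching characters, invoking Lemma \ref{invh} as the crucial input; once the isomorphism $\rho_{-\sigma}\cong \rho^\ve$ is in hand, the canonical $H$-invariant nondegenerate pairing between $\rho^\ve$ and $\rho$ transports to one between $\rho_{-\sigma}$ and $\rho$, witnessing the desired contragredient relation.

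First, I would check that the map $\phi: H \to H$, $h \mapsto h^{-\sigma}$, is a continuous group automorphism of $H$: since $\sigma$ is an anti-automorphism of $H$, one has $\phi(h_1 h_2) = (h_2^\sigma h_1^\sigma)^{-1} = h_1^{-\sigma} h_2^{-\sigma} = \phi(h_1)\phi(h_2)$. Hence $\rho_{-\sigma} = \rho \circ \phi$ is a continuous representation on the same Fr\'{e}chet space $V_H$, and being $\rho$ precomposed with a topological group automorphism, it inherits all the defining properties of an irreducible Casselman-Wallach representation (smoothness, moderate growth, $\RZ$-finiteness, admissibility, irreducibility) from $\rho$.

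To compare characters, viewing $\chi_\rho \in \con^{-\xi}(H)$ (Remark \ref{rch}(A)), the tautological identity $\chi_{\rho_{-\sigma}}(h)=\chi_\rho(h^{-\sigma})$ and the standard identity $\chi_{\rho^\ve}(h)=\chi_\rho(h^{-1})$ hold as equalities of tempered generalized functions on $H$. Since $\chi_\rho$ is invariant under the adjoint action of $H$, Lemma \ref{invh} gives $\chi_\rho(h^\sigma)=\chi_\rho(h)$, and therefore
\[
  \chi_{\rho_{-\sigma}}(h)=\chi_\rho(h^{-\sigma})=\chi_\rho((h^{-1})^\sigma)=\chi_\rho(h^{-1})=\chi_{\rho^\ve}(h).
\]
Since irreducible Casselman-Wallach representations of a real reductive group are determined up to isomorphism by their distributional characters (Harish-Chandra's theorem on the classification of irreducible admissible representations, together with the Casselman-Wallach globalization theorem invoked in Section 1), this equality yields $\rho_{-\sigma}\cong \rho^\ve$, completing the argument. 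The main thing to watch is that each of the character identities above is an honest equality of tempered generalized functions on $H$ rather than a pointwise statement; the essential content is that $\sigma$-invariance of $\chi_\rho$, provided by Lemma \ref{invh}, converts the $\phi$-pullback of $\chi_\rho$ into its pullback under inversion, thereby identifying $\rho_{-\sigma}$ with $\rho^\ve$.
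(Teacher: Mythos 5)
Your proof is correct and follows essentially the same route as the paper: both arguments compute the character of $\rho_{-\sigma}$, invoke Lemma \ref{invh} (via $\Ad$-invariance of the character) to identify it with the character of the contragredient, and conclude by the fact that an irreducible Casselman-Wallach representation is determined by its character. The extra verification that $h\mapsto h^{-\sigma}$ is a group automorphism, so that $\rho_{-\sigma}$ is indeed an irreducible Casselman-Wallach representation, is a worthwhile detail the paper leaves implicit.
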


\begin{proof}
Denote by
\[
  \chi_{\rho}\in \con^{-\xi}(H)
\]
the character of $(V_H, \rho)$. Then its contragredient
representation has character $\chi_{\rho}(h^{-1})$.

It is clear that $(V_H,\rho_{-\sigma})$ is an irreducible
 Casselman-Wallach representation, with character
$\chi_{\rho}(h^{-\sigma})$.  Since a character is always
invariant under the adjoint action, Lemma \ref{invh} implies that
\[
   \chi_{\rho}(h^{-1})=\chi_{\rho}(h^{-\sigma}).
\]
The lemma then follows from the well-known fact that an irreducible
Casselman-Wallach representation is determined by its
character.
\end{proof}

\begin{lem}
\label{sgelfand}
We have that
\[
  \dim \Hom_S (V_H,\C)\leq 1.
\]
\end{lem}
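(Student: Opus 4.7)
The plan is to obtain the lemma as a direct application of Theorem \ref{gelfand}(a), applied not to $G$ but to the larger group $H = G \times G'$, with both distinguished subgroups taken to be the diagonal copy $S \subset H$ and both characters trivial. The anti-automorphism of $H$ to be used is the componentwise one $(g,g')^\sigma := (g^\sigma, {g'}^\sigma)$, which is well-defined since $\sigma(G') = G'$.

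First I would verify the hypothesis of Theorem \ref{gelfand}(a) in this setting. Any $f \in \con^{-\xi}(H)$ satisfying $f(sx) = f(x)$ and $f(xs) = f(x)$ for all $s \in S$ is bi-$S$-invariant, so Lemma \ref{tauinv} immediately gives $f(x^\sigma) = f(x)$ (in fact no eigenvector hypothesis is needed). Thus the criterion of part (a) applies to $H$.

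Next I would produce the required contragredient pair of irreducible Casselman-Wallach representations of $H$. This is exactly what Lemma \ref{contrah} supplies: the pair $\bigl((V_H,\rho),\,(V_H,\rho_{-\sigma})\bigr)$ consists of two irreducible Casselman-Wallach representations of $H$ which are contragredient to each other. By Theorem \ref{gelfand}(a) applied with $U^\infty = (V_H,\rho)$ and $V^\infty = (V_H,\rho_{-\sigma})$, one has
\[
  \dim \Hom_S\bigl((V_H,\rho),\BC\bigr) \cdot \dim \Hom_S\bigl((V_H,\rho_{-\sigma}),\BC\bigr) \leq 1.
\]

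Finally I would identify the second factor with the first. Since $\sigma$ preserves $S$ and inversion is a bijection of $S$, the map $s \mapsto s^{-\sigma}$ is a bijection of $S$ onto itself. Hence a continuous linear functional $\ell$ on $V_H$ satisfies $\ell\bigl(\rho_{-\sigma}(s)v\bigr) = \ell(v)$ for all $s \in S$ if and only if $\ell\bigl(\rho(s)v\bigr) = \ell(v)$ for all $s \in S$, i.e.\ $\Hom_S\bigl((V_H,\rho_{-\sigma}),\BC\bigr) = \Hom_S\bigl((V_H,\rho),\BC\bigr)$. Substituting into the inequality above forces $\dim \Hom_S(V_H,\BC) \leq 1$, which is the desired conclusion. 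There is no real obstacle here; all the analytic content has already been absorbed into Lemmas \ref{tauinv} and \ref{contrah} and into Theorem \ref{gelfand}, and the proof is essentially a bookkeeping reduction.
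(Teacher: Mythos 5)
Your proof is correct and follows essentially the same route as the paper: verify the hypothesis of Theorem~\ref{gelfand}(a) on $H$ via Lemma~\ref{tauinv}, invoke Lemma~\ref{contrah} to produce the contragredient pair, and then identify the two multiplicities. The paper states the last step more tersely as a one-line consequence of Lemma~\ref{contrah}; your explicit observation that $s\mapsto s^{-\sigma}$ is a bijection of $S$ onto itself, so that $\Hom_S\bigl((V_H,\rho_{-\sigma}),\BC\bigr)=\Hom_S\bigl((V_H,\rho),\BC\bigr)$, is exactly the justification the paper leaves implicit.
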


\begin{proof}
Denote by $U_H$ the irreducible Casselman-Wallach representation which is contragredient to $V_H$. Lemma \ref{tauinv} and Part (a) of Theorem \ref{gelfand} imply that
\[
 \dim \Hom_S(U_H,\C)\, \dim \Hom_S (V_H,\C)\leq 1.
\]
Lemma \ref{contrah} implies that
\[
  \dim \Hom_S (U_H,\C)=\dim \Hom_S (V_H,\C).
\]
We therefore conclude that $\dim \Hom_S (V_H,\C)\leq 1$.
\end{proof}

\vsp

We finish the proof of Corollary \ref{sgel} by taking $V_H=V\widehat \otimes V'$ in Lemma \ref{sgelfand}.

\vsp

\noindent Acknowledgements: the authors thank Bernhard Kr\"{o}tz for helpful discussions, and in particular
for pointing out an error in an early version of the manuscript. The comments of the referee also help
the authors to make several subtle points clearer. Binyong Sun is supported by NUS-MOE grant
R-146-000-102-112, and NSFC grants 10801126 and 10931006. Chen-Bo Zhu is supported by
NUS-MOE grant R-146-000-102-112.

\end{document}